\newcommand{\ball}[1]{\ensuremath{B_{#1}}}
\newcommand{\cl}[1]{\ensuremath{\overline{{#1}}}}
\newcommand{\diam}[1]{\ensuremath{\sdiam{\left({#1}\right)}}}
\newcommand{\dual}[1]{\ensuremath{{#1}^*}}
\newcommand{\ep}{\varepsilon}
\newcommand{\itp}[2]{\ensuremath{{#1}\hat{\otimes}_{\varepsilon}{#2}}}
\newcommand{\lspan}[1]{\ensuremath{\aspan({#1})}}
\newcommand{\map}[3]{\ensuremath{{#1}:{#2}\to{#3}}}
\newcommand{\mpi}{\ensuremath{\mathrm{\pi}}}
\newcommand{\n}[1]{\ensuremath{\left\|{#1}\right\|}}
\newcommand{\N}{\mathbb{N}}
\newcommand{\ndot}{\ensuremath{\left\|\cdot\right\|}}
\newcommand{\oneton}[2]{\ensuremath{{#1}_1,\ldots,{#1}_{#2}}}
\newcommand{\R}{\mathbb{R}}
\newcommand{\set}[2]{\ensuremath{\left\{{#1}\;:\;\,{#2}\right\}}}
\newcommand{\tn}[1]{\ensuremath{\left|\kern-.9pt\left|\kern-.9pt\left|{#1}\right|\kern-.9pt\right|\kern-.9pt\right|}}
\newcommand{\tndot}{\ensuremath{\left|\kern-.9pt\left|\kern-.9pt\left|\cdot\right|\kern-.9pt\right|\kern-.9pt\right|}}
\newcommand{\tp}[2]{\ensuremath{{#1}\otimes{#2}}}
\newcommand{\ttri}{|\kern-.9pt|\kern-.9pt|}
\newcommand{\ttrin}{\ttri\cdot\ttri}
\newcommand{\weakstar}{\ensuremath{w^*}}
\newcommand{\wone}{\ensuremath{\omega_1}}
\DeclareMathOperator{\aspan}{span}
\DeclareMathOperator{\card}{card}
\DeclareMathOperator{\conv}{conv}
\DeclareMathOperator{\ext}{ext}
\DeclareMathOperator{\sdiam}{diam}
\DeclareMathOperator{\supp}{supp}
\newtheorem{thm}{Theorem}[section]
\newtheorem{cor}[thm]{Corollary}
\newtheorem{lem}[thm]{Lemma}
\newtheorem{prop}[thm]{Proposition}
\theoremstyle{definition}
\newtheorem{defn}[thm]{Definition}
\newtheorem{example}[thm]{Example}
\newtheorem{rem}[thm]{Remark}
\begin{document}
\title[Topology, isomorphic smoothness and polyhedrality in Banach spaces]{Topology, isomorphic smoothness and polyhedrality in Banach spaces}
\begin{abstract}
In recent decades, topology has come to play an increasing role in some geometric aspects of Banach space theory. The class of so-called \emph{$w^*$-locally relatively compact} sets was introduced recently by Fonf, Pallares, Troyanski and the author, and were found to be a useful topological tool in the theory of isomorphic smoothness and polyhedrality in Banach spaces. We develop the topological theory of these sets and present some Banach space applications.
\end{abstract}

\author{Richard J.~Smith}
\address{School of Mathematics and Statistics, University College Dublin, Belfield, Dublin 4, Ireland}
\email{richard.smith@maths.ucd.ie}
\urladdr{http://mathsci.ucd.ie/~rsmith}

\thanks{The author was supported financially by Science Foundation Ireland under Grant Number `SFI 11/RFP.1/MTH/3112'.}

\subjclass[2010]{46B03, 46B20, 46B26}
\date{\today}
\maketitle

\section{Introduction}\label{sect_introduction}

Given $k \in \N \cup \{\infty\}$, we say that a norm $\ndot$ on a Banach space $X$ is {\em $\mathscr{C}^k$-smooth} if it is $k$-times continuously Frech\'et differentiable on $X\setminus\{0\}$. A norm $\ndot$ is said to {\em depend locally on finitely many coordinates from $H \subseteq X^*$} ({\em LFC-$H$} for short) if, given $x \in X\setminus\{0\}$, there exists an open set $U \ni x$, functionals $\oneton{f}{n} \in H$, and a map $\map{\Phi}{\R^n}{\R}$, such that
\[
\n{y} \;=\; \Phi(f_1(y),\dots, f_n(y)),
\]
whenever $y \in U$. If $\ndot$ is LFC-$X^*$ then we simply call it LFC. This notion was first explicitly introduced and investigated in \cite{pwz:81}. Many $\mathscr{C}^k$-smooth norms happen to be LFC, see e.g.\ \cite{hajek:95,hh:07} and the survey \cite{hz:06}, because it is much easier to construct smooth functions locally on finite-dimensional spaces and then glue them together, rather than build smooth functions directly on infinite-dimensional space (a notable exception being the canonical norm on Hilbert space).

We say that a norm $\ndot$ on a Banach space $X$ is {\em polyhedral} if, given any finite-dimensional subspace $E$, the unit ball $B_E$ of $E$ with respect to $\ndot$ is a polytope, that is, there exist $\oneton{f}{n} \in X^*$ such that
\[
\n{y} \;=\; \max\{f_1(y),\dots, f_n(y)\},
\]
whenever $y \in E$. The theory of polyhedral norms was instigated by Klee \cite{klee:59} and major contributions were made later by Fonf in a series of papers, including \cite{fonf:80,fonf:81,fonf:00}.

Polyhedral norms and LFC norms share several properties. The next theorem gathers results from \cite[Theorem 5]{fonf:80}, \cite{pwz:81}, \cite[Corollary 2]{fz:97} and \cite[Theorem 3.9]{fonf:00}.

\begin{thm}[\cite{fonf:80,pwz:81,fz:97,fonf:00}]\label{thm_nec_conditions}
Let the norm of $X$ be either polyhedral or LFC. Then
\begin{enumerate}
\item $X$ is an Asplund space, which is equivalent to saying that every separable subspace of $X$ has separable dual, and
\item $X$ is $c_0$-saturated, i.e., every infinite-dimensional subspace of $X$ contains an isomorphic copy of $c_0$.
\end{enumerate}
\end{thm}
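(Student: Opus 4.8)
The plan is to treat the two conclusions separately, but in each case to reduce first to the situation where $X$ is separable. This is legitimate: by the characterization quoted in the statement, the Asplund property \emph{is} the assertion that every separable subspace has separable dual, while $c_0$-saturation need only be checked on separable infinite-dimensional subspaces, since every infinite-dimensional subspace contains one. The reduction is available because both hypotheses pass to subspaces: if $Y \subseteq X$ and $\ndot$ is polyhedral, then its restriction to $Y$ is polyhedral (a finite-dimensional subspace of $Y$ is also one of $X$); and if $\ndot$ is LFC-$H$, then its restriction to $Y$ is LFC-$\set{f|_Y}{f \in H}$. So throughout I may assume that $X$ is a separable polyhedral, respectively separable LFC, space.

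For part (1) I would deduce separability of $X^*$, using the Bishop--Phelps theorem as the engine: the norm-attaining functionals are norm-dense in $X^*$. The scheme is to produce a \emph{countable} set $H_0 \subseteq X^*$ such that every norm-attaining functional lies in $\aspan(H_0)$; density then forces $\cl{\aspan}(H_0) = X^*$, and as $H_0$ is countable this is separable. In the LFC case I construct $H_0$ as follows. Since $X$ is separable its unit sphere is Lindel\"of, so from the open cover witnessing the LFC property I extract a countable subcover, and let $H_0$ be the (countable) union of the finite families $\oneton{f}{n}$ attached to the sets of this subcover. Now suppose $f \in X^*$, $\n{f}=1$, attains its norm at $x$, $\n{x}=1$, and let $U \ni x$ be a set of the subcover on which $\n{y} = \Phi(f_1(y),\dots,f_n(y))$. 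If $h \in \bigcap_i \ker f_i$ then $t \mapsto \n{x+th}$ is locally constant near $0$, so the support inequality $\n{x+th} \ge 1 + t f(h)$, read for both signs of small $t$, forces $f(h)=0$; hence $f$ annihilates $\bigcap_i \ker f_i$ and therefore $f \in \aspan(\oneton{f}{n}) \subseteq \aspan(H_0)$. For the polyhedral case I would instead invoke the structural theory of separable polyhedral spaces to supply a countable norming set $M \subseteq X^*$ with the property that each norm-attaining functional belongs to $\aspan(F)$ for some finite $F \subseteq M$; taking $H_0 = M$, the identical Bishop--Phelps argument then yields separability of $X^*$.

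For part (2) I would fix a separable infinite-dimensional subspace and build inductively a normalized sequence $(x_k)$, together with suitable functionals, so that $\n{\sum_k a_k x_k}$ is uniformly equivalent to $\sup_k |a_k|$; any such sequence spans an isomorphic copy of $c_0$. In the LFC case it is precisely the local dependence on finitely many coordinates that drives the induction: having chosen $\oneton{x}{k}$, I select $x_{k+1}$ inside the infinite-dimensional intersection of the kernels of the coordinate functionals governing the norm near the earlier points, and then, by a small perturbation and gliding-hump argument, arrange that the finitely many functionals controlling the norm near $x_{k+1}$ annihilate $\oneton{x}{k}$. This mutual disjointness of the controlling coordinates forces the norm to behave like a supremum along the sequence, delivering $c_0$.

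The main obstacle is the polyhedral half of (2). That every infinite-dimensional polyhedral space contains $c_0$ is Fonf's theorem, and it is markedly deeper than the LFC construction above: rather than exploiting a direct local coordinate argument, its proof rests on a delicate analysis of the extreme points of $B_{X^*}$ and their $\weakstar$-separation. This is the step I expect to be hardest to reproduce from first principles, and for it I would rely on the cited work of Fonf rather than attempt an independent derivation.
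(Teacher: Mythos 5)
First, a point of comparison: the paper contains \emph{no proof} of this statement. Theorem \ref{thm_nec_conditions} is presented as a compilation of results quoted from \cite{fonf:80}, \cite{pwz:81}, \cite{fz:97} and \cite{fonf:00}, so your attempt can only be measured against those classical arguments. In that light, your decision to delegate both polyhedral halves to Fonf's work is exactly what the paper does, and is unobjectionable. Your separable reduction is correct and standard: polyhedrality and the LFC property both pass to subspaces by restricting functionals, Asplundness is (by the characterisation built into the statement) a property of separable subspaces, and $c_0$-saturation need only be tested on separable subspaces. Your LFC $\Rightarrow$ separable dual argument is also correct and is essentially the classical one: the computation showing that a functional attaining its norm at $x$ must annihilate $\bigcap_i \ker f_i$, hence lie in $\aspan(\oneton{f}{n})$, is precisely the fact from \cite{hajek:95} that the paper itself invokes in the proof of Corollary \ref{LFC}; combining it with the Lindel\"of property of the sphere and Bishop--Phelps is the standard route.

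The genuine gap is in the LFC half of part (2). You propose to ``arrange that the finitely many functionals controlling the norm near $x_{k+1}$ annihilate $\oneton{x}{k}$.'' You cannot arrange this: the controlling functionals at a point are dictated by the local representation of the norm, not chosen by you, and perturbing $x_{k+1}$ merely changes which LFC neighbourhood it sits in --- there is no reason the functionals attached to the new neighbourhood should kill the earlier vectors. What your kernel-selection legitimately gives is one-sided flatness of the sphere: choosing $x_{k+1}$ in the intersection of the kernels of the functionals governing the norm near the finitely many points already constructed, one obtains $\n{x_1 + \sum_{i \geqslant 2} \ep_i \delta_i x_i} = \n{x_1}$ for all finite sign choices, but only for perturbation sizes $\delta_i$ small enough to remain in neighbourhoods that shrink as the construction proceeds. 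This bounds the signed sums $\sum_i \ep_i \delta_i x_i$, so the series $\sum_i \delta_i x_i$ is weakly unconditionally Cauchy; by Bessaga--Pe\l czy\'nski this yields a copy of $c_0$ in the subspace \emph{only if} the $\delta_i$ stay bounded away from zero, whereas if $\delta_i \to 0$ the series may simply converge unconditionally and the argument produces nothing. Securing that uniform lower bound on the $\delta_i$ is the real content of the proof in \cite{pwz:81}, and your sketch passes over it; as written, the ``mutual disjointness of controlling coordinates'' that is supposed to force sup-norm behaviour is not available.
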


The notion of a boundary of a Banach space (or the norm of the space) is central to the theory of both polyhedral and LFC norms. A set $B \subseteq B_{X^*}$ is called a {\em boundary} of the Banach space $(X,\ndot)$ if, whenever $x \in X$, there exists $f \in B$ satisfying $f(x)=\n{x}$. Sometimes this is referred to as a {\em James boundary} in the literature. The properties of boundaries are not preserved by isomorphisms in general:~boundaries of $(X,\ndot)$ and $(X,\tndot)$, where $\tndot$ is an equivalent norm, may be very different in character.

Standard examples of boundaries include the dual unit sphere $S_{X^*}$ of $X^*$, and the set of extreme points $\ext(B_{X^*})$ of $B_{X^*}$, by the Hahn-Banach and Krein-Milman Theorems. If $(X,\ndot)$ has a boundary that is `small' (e.g.\ countable) or otherwise well-behaved, then $X$ enjoys good geometric properties as a consequence.

In particular, in the separable case, we obtain a series of equivalent conditions that tie together the above notions very tightly, up to isomorphism. Hereafter, if a Banach space (with a given norm) is said to admit a norm, then it should be understood that this norm is equivalent to the original one.

\begin{thm}[\cite{fonf:90,hajek:95}]\label{sepequiv} Let $X$ be a separable Banach space. The following
statements are equivalent.
\begin{enumerate}
\item $X$ admits a norm $\ndot$ such that $(X,\ndot)$ has a countable boundary;
\item $X$ admits a norm $\ndot$ such that $(X,\ndot)$ has a norm-$\sigma$-compact boundary;
\item $X$ admits a polyhedral norm;
\item $X$ admits a polyhedral LFC norm;
\item $X$ admits a LFC norm;
\item $X$ admits a $\mathscr{C}^\infty$-smooth LFC norm.
\end{enumerate}
\end{thm}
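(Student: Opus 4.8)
The plan is to derive the equivalences from a small number of trivial inclusions together with three genuinely substantial ingredients. I would first record the routine implications: a $\mathscr{C}^\infty$-smooth LFC norm is LFC, giving $(6)\Rightarrow(5)$; a polyhedral LFC norm is both polyhedral and LFC, giving $(4)\Rightarrow(3)$ and $(4)\Rightarrow(5)$; and a countable set is a countable union of singletons and hence norm-$\sigma$-compact, giving $(1)\Rightarrow(2)$. The skeleton I aim for is then: $(1)$, $(2)$, $(3)$ form a self-contained equivalence; from $(1)$ I construct the norms of $(4)$ and $(6)$; each of $(4)$ and $(6)$ trivially yields $(5)$; and $(5)$ feeds back to $(2)$. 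Since $\{(1),(2),(3)\}$ are mutually equivalent and every other condition both implies and is implied by a member of this class, all six coincide.

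The geometric core is the equivalence of $(1)$, $(2)$ and $(3)$, due in essence to Fonf \cite{fonf:80,fonf:90}. For $(3)\Rightarrow(1)$ I would use his structural description of polyhedral norms: when $\ndot$ is polyhedral, the extreme points $\ext(B_{\dual{X}})$ constitute a boundary and are suitably isolated in the $w^*$-topology, so that separability of $X$ forces them to be countable \cite{fonf:80}. The reverse direction $(2)\Rightarrow(3)$ is harder. Beginning with a norm whose boundary $B=\bigcup_n K_n$ is an increasing union of norm-compact sets, I would approximate $B_X$ by the polars of convex bodies assembled from finite $\ep$-nets of the $K_n$. Norm-compactness makes each $K_n$ totally bounded, so these finite nets recover $B$ up to a controlled error while remaining locally finite on every finite-dimensional subspace; the resulting equivalent norm then cuts out each finite-dimensional section of its ball with finitely many functionals, i.e.\ as a polytope. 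I expect this renorming to be the main obstacle, since one must simultaneously keep the norm equivalent and guarantee genuine polyhedrality of all finite-dimensional sections.

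For the constructive step $(1)\Rightarrow(6)$ I would pass to a countable boundary $\{f_n\}\subseteq S_{\dual{X}}$, normalised so that $\n{x}=\sup_n f_n(x)$, and exploit the local finiteness that the polyhedral structure supplies. Fixing a $\mathscr{C}^\infty$ convex function $\phi$ that vanishes on $(-\infty,c]$ for some $c<1$ and a rapidly decaying sequence of positive weights $a_n$, I would define the new norm as the Minkowski functional of
\[
\set{x\in X}{\sum_n \phi\!\left(\tfrac{f_n(x)}{a_n}\right)\le 1}.
\]
The decay of the $a_n$ ensures that near each nonzero point all but finitely many summands vanish, which yields the LFC property; smoothness of $\phi$ together with this local finiteness gives $\mathscr{C}^\infty$-smoothness. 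Running the same scheme with $\phi$ replaced by the Minkowski functional of a polytope, that is, a local maximum of finitely many linear pieces, produces instead the polyhedral LFC norm of $(4)$, establishing $(1)\Rightarrow(4)$. The delicate point here is balancing the decay of the weights against equivalence of the norm, so that the level set above is a bounded, absorbing, symmetric convex body.

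Finally, for $(5)\Rightarrow(2)$ I would use the LFC property directly. A space with an LFC norm is Asplund by Theorem~\ref{thm_nec_conditions}, so since $X$ is separable, $\dual{X}$ is separable and the coordinate functionals may be drawn from a fixed countable norming set $H=\{h_n\}$. Local dependence on finitely many elements of $H$ confines the subdifferential of $\ndot$, near each point, to the span of finitely many $h_n$; a covering argument over the sphere then exhibits a boundary lying inside a countable union of norm-compact sets, which is exactly $(2)$. With this arrow the loop closes and the six statements are seen to be equivalent.
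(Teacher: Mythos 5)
The paper offers no proof of this theorem: it is quoted as known, with references to \cite{fonf:90} and \cite{hajek:95}, so your proposal can only be measured against the standard arguments those citations point to. Your implication graph is complete and follows essentially the same decomposition as the literature: Fonf's results for the cycle $(1)\Leftrightarrow(2)\Leftrightarrow(3)$, H\'ajek-type implicit constructions for $(1)\Rightarrow(4)$ and $(1)\Rightarrow(6)$, and the support-functional argument for $(5)\Rightarrow(2)$ --- the same argument this paper reuses in Corollary \ref{LFC}. Two remarks on the arrows that are essentially right. First, $(3)\Rightarrow(1)$ via countability of $\ext(\ball{\dual{X}})$ for separable polyhedral spaces is indeed exactly Fonf's theorem \cite{fonf:80}. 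Second, in $(5)\Rightarrow(2)$ your justification that the coordinates ``may be drawn from a fixed countable norming set'' is wrong as stated (exact local dependence is destroyed if you perturb the coordinate functionals into a prescribed dense set), but the step is saved by the covering argument you then invoke: $X\setminus\{0\}$ is second countable, hence Lindel\"of, so countably many LFC neighbourhoods cover it and $H$ is the union of the associated finite sets of functionals; neither Asplundness nor dual separability is needed.

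The genuine gap is the local-finiteness mechanism in $(1)\Rightarrow(6)$ and $(1)\Rightarrow(4)$. A countable boundary by itself supplies no local finiteness --- that is precisely what must be proved --- and decaying weights make matters worse, not better: the summand $\phi(f_n(x)/a_n)$ vanishes near $x$ exactly when $f_n \leqslant c\,a_n$ there, so small $a_n$ switches \emph{more} terms on. Test your recipe on $X=c$ with the supremum norm and its countable boundary $\{\pm e_n^*\}\cup\{\pm\lim\}$: at $x=(1,1,1,\dots)$ one has $e_n^*(x)=1$ for every $n$. If $a_n\to 0$, the defining series diverges at every positive multiple of $x$, so the body misses the whole ray and the Minkowski functional is not finite; if the $a_n$ are unbounded, the body contains $c\,a_N e_N$ for every $N$ and is unbounded, so the functional is not an equivalent norm; and if $a_n\asymp 1$, infinitely many summands are active at points arbitrarily close to the boundary point of the body in the direction of $x$, so local finiteness, the LFC property and smoothness all fail there. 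No choice of scalar weights repairs this. What is missing is the lemma that constitutes the real content of Fonf's and H\'ajek's constructions: if $\{f_n\}$ is a boundary of $\ndot$ and $\delta_n\downarrow 0$, then $\sup_n\,(1+\delta_n)|f_n(x)|$ is attained and is, near each $x\neq 0$, a maximum of finitely many of the functionals (the attaining index $n_0$ contributes at least $(1+\delta_{n_0})\n{x}$, while every $n$ with $\delta_n<\delta_{n_0}$ contributes strictly less, leaving only finitely many competitors, and this persists on a neighbourhood). This lemma already yields the polyhedral LFC norm of $(4)$, and the smooth body of $(6)$ only becomes locally finite when the cut-off of the $n$-th summand is calibrated to these slacks, i.e.\ at levels of the form $(1+\delta_n)^{-1}$ increasing with $n$, not at decaying levels. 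Your $(2)\Rightarrow(3)$, via perturbed finite nets of the compacta, hinges on exactly the same lemma; you rightly single that step out as the main obstacle, but as written the proposal never contains the idea that makes it --- and both constructive arrows --- work.
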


Moreover, by Theorem \ref{thm_nec_conditions}, if $X$ is separable and satisfies any of the conditions above then it its dual is separable. In general, the relationship between the existence of polyhedral norms, LFC norms and smooth (LFC) norms is less clear. The following notion, involving a pair of topologies, was introduced recently in \cite{fpst:14}, and was used to provide a sufficient condition for the existence of equivalent polyhedral and smooth LFC norms.

\begin{defn}[{\cite[Definition 11]{fpst:14}}]\label{lrc}
Let $X$ be a set and let $\tau$ and $\rho$ be two Hausdorff topologies on $X$, with $\rho$ finer than $\tau$. We say that $E \subseteq X$ is {\em $\tau$-locally relatively $\rho$-compact} ($(\tau,\rho)$-LRC for short), if given $x \in E$, there exists a $\tau$-open set $U \subseteq X$, such that $x \in U$ and $\cl{E \cap U}^{\rho}$ is $\rho$-compact. We say that $E$ is {\em $\sigma$-$(\tau,\rho)$-LRC} if it can be expressed as a countable union of $(\tau,\rho)$-LRC sets.
\end{defn}

The interplay between pairs of topologies can be found in other topological notions employed in Banach space theory, such as sets of {\em small local diameter} \cite[p.~162]{jnr:92}, and Raja's property $P(\cdot,\cdot)$ (see Section \ref{sect_LUR}).

In the main, we shall be concerned with ($w^*,\ndot$)-LRC subsets of dual Banach spaces. For brevity, we shall simply call these $w^*$-LRC sets. Recall that a {\em Markushevich basis} (M-basis) $(e_\gamma,e_\gamma^*)_{\gamma \in \Gamma}$ of a Banach space $X$ is a biorthogonal system, such that $X=\cl{\aspan}^{\ndot}(e_\gamma)_{\gamma \in \Gamma}$ and $X^* = \cl{\aspan}^{w^*}(e^*_\gamma)_{\gamma \in \Gamma}$. Given $x \in X$ and $f \in X^*$, define their {\em supports}
\[
\supp(x) \;=\; \set{\gamma \in \Gamma}{e^*_\gamma(x) \neq 0}\quad\text{and}\quad \supp(f) \;=\; \set{\gamma \in \Gamma}{f(e_\gamma) \neq 0}.
\]

\begin{example}[{\cite[Example 6 and Proposition 20]{fpst:14}}]\label{weakstarlrcex}~
\begin{enumerate}
\item Any relatively norm-compact or $w^*$-relatively discrete subset of a dual Banach space is $w^*$-LRC.
\item Any norm-$K_\sigma$ set is $\sigma$-$w^*$-LRC.
\item Let $(e_\gamma,e_\gamma^*)_{\gamma \in \Gamma}$ be a M-basis of a Banach space $X$. Given a fixed $n \in \N$, the set
\[
\set{f \in X^*}{\card(\supp f) = n},
\]
is $w^*$-LRC, and thus $\set{f \in X^*}{\supp f \text{ is finite}}$ is $\sigma$-$w^*$-LRC.
\item If $K$ is a compact space that is \emph{$\sigma$-discrete} (i.e.~the union of countably many sets, each of which is relatively discrete), then the set of Dirac measures $\set{\delta_t}{t \in K} \subseteq C(K)^*$ is $\sigma$-$w^*$-LRC.
\end{enumerate}
\end{example}

It was shown in \cite[p.\ 254]{fpst:14} that the unit sphere $S_{X^*}$ of an infinite-dimensional dual Banach space $X^*$ can never be $\sigma$-$w^*$-LRC (this applies equally to $S_Y$, where $Y \subseteq S_{X^*}$ is any infinite-dimensional subspace, and follows from Proposition \ref{prop_lrc_facts} (1) below). However, it is possible for certain boundaries, such as $\ext(B_{X^*})$ in some cases, to be $\sigma$-$w^*$-LRC. In applications, our boundaries will be simultaneously $w^*$-LRC and $w^*$-$K_\sigma$ (a countable union of $w^*$-compact sets). Our principal application follows after the next definition.

\begin{defn}\label{epapprox}
Let $(X,\ndot)$ be a Banach space and let $\ep>0$. We will say that a new norm $\tndot$ on $X$ {\em $\ep$-approximates}
$\ndot$ if 
\[
\n{x} \;\leqslant\; \tn{x} \;\leqslant\; (1+\ep)\n{x}, 
\]
for all $x \in X$. Moreover, given a property $\mathbf{P}$ of norms, we say that $\ndot$ can be {\em approximated} by norms having $\mathbf{P}$ if, given $\ep>0$ there is a norm $\tndot$ having $\mathbf{P}$ that $\ep$-approximates $\ndot$.
\end{defn}

\begin{thm}[{\cite[Theorem 2.1]{bible:14}} and {\cite[Theorem 7]{fpst:14}}]\label{tb}
Let $(X,\ndot)$ be a Banach space having a boundary $B$ that is both $\sigma$-$w^*$-LRC and $w^*$-$K_\sigma$. Then $\ndot$ can be approximated by both $C^\infty$-smooth and polyhedral norms.
\end{thm}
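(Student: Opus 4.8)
The plan is to treat the two conclusions in parallel, driving both from a single combinatorial decomposition of the boundary that the hypotheses supply. First I would intersect the two given decompositions: writing $B = \bigcup_n K_n$ with each $K_n$ $w^*$-compact and $B = \bigcup_m L_m$ with each $L_m$ being $(w^*,\ndot)$-LRC, I set $S_{n,m} = K_n \cap L_m$. Since a subset of a $(w^*,\ndot)$-LRC set is again $(w^*,\ndot)$-LRC, each $S_{n,m}$ is $(w^*,\ndot)$-LRC with $w^*$-compact closure contained in $K_n$. Re-indexing, $B = \bigcup_k S_k$ where each $S_k$ is simultaneously $(w^*,\ndot)$-LRC and relatively $w^*$-compact. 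The crucial local consequence I would record is this: for each $f \in S_k$ there is a $w^*$-open $U \ni f$ with $\cl{S_k \cap U}^{\ndot}$ norm-compact, so $S_k \cap U$ is covered by finitely many norm-$\delta$-balls for every $\delta>0$. Thus, locally in the $w^*$-topology, each piece of $B$ is \emph{finite up to arbitrarily small norm error}; this is the property that powers both constructions.

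Fix $\ep>0$. The second step is to extract from $B$ a countable family $\{g_i\}_{i\in\N}$ that, after symmetrising, is still ``$\ep$-norming'' — so that the gauge $p(x)=\sup_i g_i(x)$ defines, after a trivial rescaling, a norm that $\ep$-approximates $\ndot$ in the sense of Definition \ref{epapprox} — while being \emph{locally finite on its level sets}: for every $x\neq 0$ there is $\theta<1$ with $\{\,i : g_i(x)\ge\theta\,p(x)\,\}$ finite. The selection is where the LRC structure is spent: a $w^*$-convergent net in some $S_k$ along which the functionals nearly maximise at $x$ has, by relative norm-compactness, a norm-convergent subnet, so the near-maximisers are trapped in a norm-compact set and can be thinned to a finite set modulo a controlled error; running this over all $k$ and diagonalising produces $\{g_i\}$. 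This is essentially the content of \cite[Theorem 7]{fpst:14}.

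With $\{g_i\}$ in hand, the third step splits. For polyhedrality I would verify Fonf's local-finiteness criterion directly from the displayed property of $\{g_i\}$: on any finite-dimensional subspace $E$, the level-set finiteness forces the unit ball of $E$ to be cut out by finitely many of the $g_i$, so $p$ restricted to $E$ is a maximum of finitely many functionals and $B_E$ is a polytope. For $\mathscr{C}^\infty$-smoothness I would replace the supremum by a smoothed maximum: compose $x\mapsto(g_i(x))_i$ with a convex $\mathscr{C}^\infty$ function on the relevant sequence space that agrees with the sup up to a small error and depends, near each point, on only finitely many coordinates (the standard $\mathscr{C}^\infty$ convex regularisation of a $\max$ of finitely many affine functions, via a partition-of-unity or $\log\!\sum\!\exp$-type formula). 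Local finiteness of $\{g_i\}$ guarantees that near each point the construction genuinely reduces to finitely many coordinates, so the resulting gauge is LFC and $\mathscr{C}^\infty$; convexity, homogeneity and $\ep$-closeness are preserved by the smoothing. This is the route of \cite[Theorem 2.1]{bible:14}.

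I expect the main obstacle to be the simultaneous control of two competing requirements in the extraction step: including \emph{enough} functionals of $B$ to keep $p$ within $(1+\ep)$ of $\ndot$ (using that $B$ is a boundary, so $\sup_{f\in B}f(x)=\n{x}$), while keeping the family \emph{locally finite} so that both constructions go through. The $(w^*,\ndot)$-LRC hypothesis is exactly the hinge that reconciles them — it upgrades $w^*$-local behaviour to norm-relative-compactness, turning ``locally many'' functionals into ``locally finitely many up to $\delta$'' — whereas the $w^*$-$K_\sigma$ hypothesis supplies the countable exhaustion needed to run the diagonal argument and to ensure that the relevant suprema are attained on the boundary itself rather than merely on its $w^*$-closure.
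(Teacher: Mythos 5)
Your first step (intersecting the two decompositions) is sound, but the pivotal second step contains a fatal gap: in the non-separable setting --- which is the entire point of this theorem --- no countable norming family $\{g_i\}_{i\in\N}$ can be extracted from the boundary. Take $X=c_0(\Gamma)$ with $\Gamma$ uncountable. The set $\set{\pm e^*_\gamma}{\gamma\in\Gamma}\cup\{0\}$ is a boundary that is $w^*$-compact and $\sigma$-$w^*$-LRC (it is the union of a relatively $w^*$-discrete set and a point), so the hypotheses hold; indeed this is the flagship application of the theorem, cf.\ \cite{bs:16}. But every $g\in\ell_1(\Gamma)=c_0(\Gamma)^*$ has countable support, so for any countable family $\{g_i\}$ the union $\bigcup_i\supp(g_i)$ is countable; choosing $\gamma$ outside it gives $g_i(e_\gamma)=0$ for all $i$, hence $p(e_\gamma)=0$ even after symmetrising. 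Thus $p$ is not even a norm, let alone an $\ep$-approximation of $\ndot$. The $w^*$-$K_\sigma$ hypothesis supplies countably many \emph{pieces}, but each piece remains uncountable, and no diagonalisation can reduce to countably many \emph{functionals}; consequently your third step, which maps $X$ into a sequence space via $x\mapsto(g_i(x))_{i\in\N}$, collapses as well. Contrary to your attribution, this countable extraction is not "essentially the content of \cite[Theorem 7]{fpst:14}"; what you have reconstructed is, in effect, the classical separable argument behind Theorem \ref{sepequiv} (1) $\Rightarrow$ (3), (6), which cannot prove the non-separable statement.

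The actual proofs keep the whole (generally uncountable) boundary and create local finiteness by \emph{perturbation} rather than extraction. As recalled in the Remark at the end of Section \ref{sect_LUR}, the proof of \cite[Theorem 7]{fpst:14} produces a function $\map{\psi}{B}{(1,1+\ep)}$ and a subset $B'\subseteq B$, and sets $\tn{x}=\sup\set{\psi(f)f(x)}{f\in B'}$; the staggered scaling factors (chosen according to the decomposition of $B$ into $w^*$-LRC and $w^*$-compact pieces) combine with the LRC property to guarantee that near each point of the new unit sphere only finitely many of the functionals $\psi(f)f$ are relevant. This yields polyhedrality with an uncountable defining family, and the companion construction of \cite[Theorem 2.1]{bible:14} exploits the same locally finite structure to build $\mathscr{C}^\infty$-smooth approximating norms. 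Your closing intuition --- that LRC-ness upgrades "locally many" functionals to "locally finitely many up to $\delta$" --- is the right one, but it must be implemented by perturbing the uncountable family in place, not by passing to a countable subfamily.
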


Theorem \ref{tb} generalises Theorem \ref{sepequiv} (2) $\Rightarrow$ (4), (6), and reduces to these implications in the separable case, because if $X$ is separable, then $(\ball{\dual{X}},w^*)$ is hereditarily Lindel\"of, which implies that a $\sigma$-$w^*$-LRC set is a countable union of relatively norm-compact sets. This theorem also generalises results from \cite{fpst:08,hh:07}.

Examples of spaces satisfying the hypotheses of Theorem \ref{tb} include certain Orlicz and Nakano spaces, preduals of certain Lorentz spaces and $C(K)$-spaces, where $K$ is compact and $\sigma$-discrete  -- see \cite[Example 3.9]{bible:14} and \cite[Examples 16 and 18, and Corollary 19]{fpst:14}.

More recently, Theorem \ref{tb} has also been used to show that an arbitrary equivalent norm on $c_0(\Gamma)$ can be approximated by both $\mathscr{C}^\infty$-smooth and polyhedral norms \cite{bs:16}. This is the first non-separable approximation result of its kind. Even more recently, this result was generalised to more classes of Banach spaces, namely certain subclasses of the Orlicz, Nakano, Lorentz predual and $C(K)$-spaces mentioned in the previous paragraph \cite{st:18}.

In this context, the most optimistic outcome would be to formulate a non-separable analogue of Theorem \ref{sepequiv} (perhaps restricted to the class of Banach spaces admitting a norm having a locally uniformly rotund dual norm -- see Section \ref{sect_LUR}). The class of sets that are both $\sigma$-$w^*$-LRC and $w^*$-$K_\sigma$ naturally generalize the norm-$\sigma$-compact sets in a manner suitable for the construction of $\mathscr{C}^\infty$-smooth and polyhedral norms in a non-separable setting. In the author's opinion, this class represents an important step in the direction of such an analogue, and believes that further study of the properties of the class is desirable. 

\section{Topological permanence properties of ($\tau,\rho$)-LRC sets}\label{sect-top}

In this section, we use topological methods to make some observations about ($\tau,\rho$)-LRC sets, specifically concerning permanence properties, that is, the preservation (or otherwise) of these sets under topological and linear operations. These observations will then yield geometric consequences. We begin by repeating some known topological facts concerning these sets that are used in this paper. 

\begin{prop}[{\cite[Proposition 12]{fpst:14}}]\label{prop_lrc_facts}
Let $X$, $\tau$ and $\rho$ be as in Definition \ref{lrc}.
\begin{enumerate}
\item If $(X,\tau)$ is a Baire space and $\cl{U}^\rho$ is not $\rho$-compact whenever $U$ is $\tau$-open and non-empty, then $X$ is not $\sigma$-$(\tau,\rho)$-LRC.
\item If $\rho$ is metrizable (with metric also denoted by $\rho$), then any $(\tau,\rho)$-LRC set $E\subseteq X$ has small local diameter, in the sense that given $x \in E$ and $\ep>0$, there exists a $\tau$-open subset $U \subseteq X$, such that $x \in E \cap U$ and $\rho$-$\diam{E \cap U} < \ep$.
\item If $E\subseteq X$ is $(\tau,\rho)$-LRC then there exists a $\tau$-open set $V$, such that $E \subseteq \cl{E}^\tau \cap V \subseteq \cl{E}^\rho$, and $\cl{E}^\tau \cap V$ is also $(\tau,\rho)$-LRC.
\end{enumerate}
\end{prop}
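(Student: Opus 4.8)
The plan is to exploit one structural observation repeatedly. Since $\rho$ is finer than $\tau$ and both are Hausdorff, the identity map $(X,\rho)\to(X,\tau)$ is continuous, so every $\rho$-compact set is $\tau$-compact and hence $\tau$-closed; in fact, being a continuous bijection from a compact space onto a Hausdorff space, the identity restricts to a homeomorphism on any $\rho$-compact set, so $\tau$ and $\rho$ agree there. Consequently, whenever $K$ is $\rho$-compact and $A\subseteq K$, we have $\cl{A}^\tau\subseteq K$, whence $\cl{A}^\tau\subseteq\cl{A}^\rho$, and moreover $\cl{A}^\rho$, being $\rho$-closed inside the $\rho$-compact set $K$, is itself $\rho$-compact. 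I would record these remarks first, since each part of the proposition reduces to bookkeeping with $\tau$- and $\rho$-closures once they are available.

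For (1), I would argue by contradiction: suppose $X=\bigcup_{n}E_n$ with each $E_n$ being $(\tau,\rho)$-LRC. Then $X=\bigcup_n\cl{E_n}^\tau$, so by the Baire property some $\cl{E_n}^\tau$ has non-empty $\tau$-interior, giving a non-empty $\tau$-open $U\subseteq\cl{E_n}^\tau$. Choosing $x\in U\cap E_n$ (non-empty because $U$ is a $\tau$-neighbourhood of a point of $\cl{E_n}^\tau$) and intersecting $U$ with the LRC neighbourhood of $x$, I obtain a non-empty $\tau$-open $W$ with $\cl{E_n\cap W}^\rho$ $\rho$-compact and $W\subseteq\cl{E_n\cap W}^\tau$. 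The latter inclusion yields $\cl{W}^\rho\subseteq\cl{W}^\tau=\cl{E_n\cap W}^\tau\subseteq\cl{E_n\cap W}^\rho$, so $\cl{W}^\rho$ is $\rho$-compact, contradicting the hypothesis that $\cl{U'}^\rho$ is non-compact for every non-empty $\tau$-open $U'$.

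For (2), given $x\in E$ and $\ep>0$, I would take the LRC neighbourhood $U_0\ni x$ with $K:=\cl{E\cap U_0}^\rho$ $\rho$-compact. Since $\tau$ and $\rho$ coincide on $K$, the relative $\rho$-ball $\set{y\in K}{\rho(x,y)<\ep/3}$ is relatively $\tau$-open, say equal to $V\cap K$ for some $\tau$-open $V\ni x$. Then $U:=U_0\cap V$ is $\tau$-open and contains $x$, and $E\cap U\subseteq K\cap V$ has $\rho$-diameter at most $2\ep/3<\ep$.

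For (3), I would set $V=\bigcup_{x\in E}U_x$, where each $U_x$ is an LRC neighbourhood of $x$ witnessing that $\cl{E\cap U_x}^\rho$ is $\rho$-compact; then $V$ is $\tau$-open and $E\subseteq\cl{E}^\tau\cap V$. The key computation is $\cl{E}^\tau\cap U_x\subseteq\cl{E\cap U_x}^\rho$: any $y\in\cl{E}^\tau\cap U_x$ lies in $\cl{E\cap U_x}^\tau$ (intersect arbitrary $\tau$-neighbourhoods of $y$ with $U_x$), and $\cl{E\cap U_x}^\tau\subseteq\cl{E\cap U_x}^\rho$ by the opening remark. Taking the union over $x$ gives the required sandwich $E\subseteq\cl{E}^\tau\cap V\subseteq\cl{E}^\rho$. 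Finally, writing $F=\cl{E}^\tau\cap V$, the same $U_x$ witness that $F$ is $(\tau,\rho)$-LRC: for $y\in F$ choose $x$ with $y\in U_x$, and note $F\cap U_x\subseteq\cl{E}^\tau\cap U_x\subseteq\cl{E\cap U_x}^\rho$, so $\cl{F\cap U_x}^\rho$ is a $\rho$-closed subset of the $\rho$-compact set $\cl{E\cap U_x}^\rho$, hence $\rho$-compact. The only genuinely delicate point throughout is the repeated upgrade from $\rho$-compactness to $\tau$-closedness (and the coincidence of the two topologies on $\rho$-compacta), together with the care needed in (1) to ensure that the Baire argument lands inside a single LRC piece so that localisation to $W$ is legitimate; the remaining manipulations of closures are routine once the opening observation is in place.
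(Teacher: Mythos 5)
Your proof is correct and complete; note that the paper itself states this proposition without proof, citing \cite[Proposition 12]{fpst:14}, so there is no in-paper argument to compare against. Your opening observation --- that $\rho$-compact sets are $\tau$-closed and that $\tau$ and $\rho$ coincide on them, whence $\tau$- and $\rho$-closures agree for subsets of $\rho$-compacta --- is exactly the standard route taken in the cited source, and your Baire argument in (1), the relative-ball argument in (2), and the sandwich $\cl{E}^\tau \cap U_x \subseteq \cl{E\cap U_x}^\rho$ in (3) all check out.
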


In order to consider permanence properties of ($\tau,\rho$)-LRC sets, we must introduce some compactness. Without some form of compactness in play, $\sigma$-$w^*$-LRC sets are not preserved under the most simple linear operations. Indeed, in \cite[Remark 1.10 (3)]{bible:14}, an example is given of a relatively $w^*$-discrete subset $E$ of a dual Banach space $X^*$, such that $E+E$ includes the unit ball of an infinite-dimensional subspace $Y \subseteq X^*$. Consequently, $E+E$ is not $\sigma$-$w^*$-LRC (see the remarks after Example \ref{weakstarlrcex}). We remark also that the compactness assumption is vital for Theorem \ref{tb} to operate correctly as well:~in a development of \cite[Remark 1.10 (3)]{bible:14}, Bible showed that the space $Z:=\ell_1 \oplus \ell_1(S_{\ell_\infty})$, which is neither Asplund nor $c_0$-saturated (because it contains $\ell_1$), and thus not isomorphically polyhedral, by Theorem \ref{thm_nec_conditions}, nevertheless admits a norm with respect to which $Z$ has a relatively $w^*$-discrete boundary \cite[Remark 3.4.4]{bible:16}. Therefore, simply being $\sigma$-$w^*$-LRC is not a sufficient condition for the existence of the types of norms under discussion.

So a little compactness seems necessary for any sort of reasonable behaviour, and it turns out we can prove some satisfactory permanence properties once we are armed with it. Theorem \ref{perfectimage} below shows that the property of being $\sigma$-$(\tau,\rho)$-LRC is preserved by certain types of perfect maps. Let $\tau$ and $\rho$ and $\tau'$ and $\rho'$ be two pairs of Hausdorff topologies on two sets $X$ and $Y$, respectively, with $\rho$ finer than $\tau$ and $\rho'$ finer than $\tau'$.

\begin{thm}\label{perfectimage}
Let $X$ be $\sigma$-$(\tau,\rho)$-LRC, and let $\map{\pi}{X}{Y}$ be a surjective map that is $\tau$-$\tau'$ perfect and $\rho$-$\rho'$ continuous. Then $Y$ is $\sigma$-$(\tau',\rho')$-LRC.
\end{thm}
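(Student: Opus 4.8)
The plan is to exploit the two defining features of a $\tau$-$\tau'$ perfect map---compactness of fibres and closedness---together with the $\rho$-$\rho'$ continuity, and to transport the countable $(\tau,\rho)$-LRC decomposition of $X$ to one of $Y$ by taking images. Write $X=\bigcup_{m}E_m$ with each $E_m$ being $(\tau,\rho)$-LRC; since $\pi$ is onto, $Y=\bigcup_m\pi(E_m)$, so it would suffice to understand the image of a single $(\tau,\rho)$-LRC set. As a first reduction I would apply Proposition \ref{prop_lrc_facts}(3) to replace each $E_m$ by $\cl{E_m}^\tau\cap V_m$, which is again $(\tau,\rho)$-LRC, is trapped between $E_m$ and $\cl{E_m}^\rho$, and is $\tau$-closed \emph{inside} the $\tau$-open set $V_m$; these regularized sets still cover $X$.

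The technical core is the following local statement, which I would isolate as a lemma: \emph{if $C\subseteq X$ is $\tau$-closed and $(\tau,\rho)$-LRC, then $\pi(C)$ is $(\tau',\rho')$-LRC.} To prove it, fix $y\in\pi(C)$ and consider the fibre $\pi^{-1}(y)$, which is $\tau$-compact. Cover it by $\tau$-open sets of two kinds: at each $x\in\pi^{-1}(y)\cap C$ use an LRC neighbourhood $U_x$ with $\cl{C\cap U_x}^\rho$ $\rho$-compact, and at each point of $\pi^{-1}(y)\setminus C$ use the single $\tau$-open set $X\setminus C$, which misses $C$ because $C$ is $\tau$-closed. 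Extract a finite subcover with union $W$. Closedness of $\pi$ yields the tube property: there is a $\tau'$-open $V'\ni y$ with $\pi^{-1}(V')\subseteq W$. Hence $\pi(C)\cap V'\subseteq\bigcup_i\pi(C\cap U_{x_i})$, and since $\pi$ is $\rho$-$\rho'$ continuous each $\pi(\cl{C\cap U_{x_i}}^\rho)$ is $\rho'$-compact, so $\rho'$-closed; their finite union is then a $\rho'$-compact $\rho'$-closed set containing $\pi(C)\cap V'$, whence $\cl{\pi(C)\cap V'}^{\rho'}$ is a $\rho'$-closed subset of it and so $\rho'$-compact. This is exactly $(\tau',\rho')$-LRC at $y$.

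It remains to deduce the theorem from this lemma, and this is where I expect the real difficulty to lie. The obstacle is that the regularized pieces $\cl{E_m}^\tau\cap V_m$ are only $\tau$-\emph{locally} closed, so the lemma does not apply to them directly; moreover one cannot hope that $\pi(\cl{E_m}^\tau\cap V_m)$ itself be $(\tau',\rho')$-LRC, since a fibre may meet $\cl{E_m}^\tau\setminus V_m$ at points that are $\tau$-limits but not $\rho$-limits of $E_m$, and then the images of the approximating net can be $\rho'$-discrete while $\tau'$-accumulating at $y$, destroying local relative $\rho'$-compactness. The plan to circumvent this is to use compactness of the fibres to localize---each fibre is covered by finitely many neighbourhoods $U_x$, so only finitely many indices are relevant to any given $y$---and to build the countable decomposition of $Y$ from the $\tau'$-open sets $Y\setminus\pi(D)$, where $D$ runs over the $\tau$-closed ``bad'' sets $\cl{E_m}^\tau\setminus V_m$ (their images are $\tau'$-closed precisely because $\pi$ is $\tau$-$\tau'$ closed), intersected with $\pi(\cl{E_m}^\tau\cap V_m)$. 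On each such piece the offending fibre points are excluded, so the fibre meets $\cl{E_m}^\tau$ only inside $V_m$ and the lemma's argument survives. The delicate point---and the step I would expect to absorb most of the work---is verifying that this countable family actually \emph{covers} $Y$, that is, reconciling the finitely many fibre indices furnished by compactness with the requirement that the bad $\tau$-limit points be avoided; handling this cleanly may require iterating the construction or first refining the decomposition of $X$.
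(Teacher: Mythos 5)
Your local lemma is correct, and its proof (cover the compact fibre by LRC neighbourhoods together with $X\setminus C$, extract a finite subcover, apply the tube property of the $\tau$-$\tau'$ closed map, and push the finitely many $\rho$-compact closures forward using $\rho$-$\rho'$ continuity) is essentially the fibre-level argument the paper itself uses. You have also correctly diagnosed the obstruction: the regularized pieces $\cl{E_m}^\tau\cap V_m$ supplied by Proposition \ref{prop_lrc_facts} (3) are only $\tau$-locally closed, and a fibre meeting $\cl{E_m}^\tau\cap V_m$ may also meet the bad set $\cl{E_m}^\tau\setminus V_m$. But the deduction of the theorem is genuinely incomplete, and the repair you sketch does not work as stated: the pieces $\pi\bigl(\cl{E_m}^\tau\cap V_m\bigr)\setminus\pi\bigl(\cl{E_m}^\tau\setminus V_m\bigr)$ need not cover $Y$. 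For instance, take a fibre $\{a,b\}$ with $a\in\cl{E_1}^\tau\cap V_1$ and $b\in\cl{E_1}^\tau\setminus V_1$, while $b\in\cl{E_2}^\tau\cap V_2$ and $a\in\cl{E_2}^\tau\setminus V_2$; then the common image point lies in no piece, and repeating the same single-index subtraction cannot remove this defect. The ``delicate point'' you flag at the end is therefore not a routine verification but precisely the missing mathematical content of the theorem.

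The paper closes exactly this gap with Haydon's combinatorial machinery. One indexes pieces not by single integers but by finite increasing sequences $\sigma=(n_1<\dots<n_k)$, setting
\[
H_\sigma \;=\; (\cl{H_{n_1}}\setminus H_{n_1})\cap\dots\cap(\cl{H_{n_{k-1}}}\setminus H_{n_{k-1}})\cap H_{n_k},
\]
and $\hat H_\sigma$ likewise with $\cl{H_{n_k}}$ in the last slot. Lemma \ref{minimal} (the Kleene--Brouwer minimality argument) states that every non-empty compact set $M$ admits a minimal $\sigma$ with $M\cap\hat H_\sigma\neq\varnothing$, and that for this minimal $\sigma$ one has $M\cap H_\sigma=M\cap\hat H_\sigma$. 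Applied to the compact fibres $M=\pi^{-1}(t)$, this shows that the countably many sets $M_\sigma=\set{t\in Y}{\pi^{-1}(t)\cap H_\sigma=\pi^{-1}(t)\cap\hat H_\sigma\neq\varnothing}$ do cover $Y$: by passing to intersections with the boundaries $\cl{H_{n_i}}\setminus H_{n_i}$, every fibre meets some $\hat H_\sigma$ only in its good part $H_\sigma$, which is exactly the configuration in which your fibre-level argument succeeds (with the $\tau$-closed set $\hat H_\sigma$ in the role of your $C$, and the LRC set $H_\sigma\subseteq H_{n_k}$ carrying the compact $\rho$-closures; note $\hat H_\sigma$ itself need not be LRC, so one needs this relative form of your lemma rather than the lemma verbatim). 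Your instinct that one must ``iterate the construction or first refine the decomposition of $X$'' points in the right direction, but the refinement needed is this indexing by finite sequences together with the minimality argument, not a further round of the single-index step.
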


Using this theorem, we are able to prove a series of further results and corollaries.

\begin{thm}\label{linspan}
If $E$ is a $\sigma$-$\weakstar$-LRC and $\weakstar$-$K_\sigma$ subset of a dual Banach space $X^*$, then so is $\lspan{E}$.
\end{thm}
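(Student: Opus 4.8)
The plan is to establish both conclusions at once by writing $\lspan{E}$ as a countable union of sets that are simultaneously $\weakstar$-compact and $\sigma$-$\weakstar$-LRC; the $\weakstar$-$K_\sigma$ assertion is then immediate, and the $\sigma$-$\weakstar$-LRC assertion will follow by applying Theorem~\ref{perfectimage} to each piece. I would first record two routine permanence facts that follow directly from Definition~\ref{lrc}: (a) every subset of a ($\sigma$-)$(\tau,\rho)$-LRC set is again ($\sigma$-)$(\tau,\rho)$-LRC, because a $\rho$-closed subset of a $\rho$-compact set is $\rho$-compact; and (b) a finite product of ($\sigma$-)$(\tau_i,\rho_i)$-LRC sets is ($\sigma$-)LRC for the product topologies, since products of the witnessing $\tau_i$-open sets and of $\rho_i$-compact sets behave as required. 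I also note that any $\rho$-compact set is trivially $(\tau,\rho)$-LRC, with the whole space as witness.

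The crucial step, where the two hypotheses are combined, is this. Write $E=\bigcup_m K_m$ with each $K_m$ $\weakstar$-compact and $K_m\subseteq K_{m+1}$. Since $K_m\subseteq E$ and $E$ is $\sigma$-$\weakstar$-LRC, fact (a) shows that each $K_m$ is at once $\weakstar$-compact and $\sigma$-$\weakstar$-LRC. For $n,j,m\in\N$ I then consider
\[
\pi_n\colon [-j,j]^n\times K_m^n\to X^*,\qquad \pi_n(\oneton{\lambda}{n},\oneton{f}{n})=\sum_{i=1}^n\lambda_i f_i,
\]
equipping $[-j,j]^n$ with its usual topology and each factor $K_m$ with its $\weakstar$ (respectively norm) topology, and letting $\tau$ (respectively $\rho$) be the resulting product topology on the domain. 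By fact (b) the domain is $\sigma$-$(\tau,\rho)$-LRC, and as a product of compact sets it is $\weakstar$-compact. Write $Y_{n,j,m}$ for the image of $\pi_n$, with its relative $\weakstar$ and norm topologies.

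I would then check the hypotheses of Theorem~\ref{perfectimage} for each $\pi_n$, viewed as a surjection onto $Y_{n,j,m}$. Since $(X^*,\weakstar)$ is a topological vector space, $\pi_n$ is $\weakstar$-$\weakstar$ continuous; as the domain is $\weakstar$-compact and $Y_{n,j,m}$ is Hausdorff, this continuity upgrades automatically to $\weakstar$-$\weakstar$ perfectness, the map being closed with $\weakstar$-compact fibres. A direct estimate, using that the scalars lie in $[-j,j]$, shows $\pi_n$ is also norm-to-norm continuous. Theorem~\ref{perfectimage} then gives that each $Y_{n,j,m}$ is $\sigma$-$\weakstar$-LRC, while continuity of $\pi_n$ shows it is $\weakstar$-compact. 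Because the $K_m$ increase to $E$, every element of $\lspan{E}$ is a combination $\sum_{i=1}^n\lambda_i f_i$ with $f_i\in K_m$ and $|\lambda_i|\leqslant j$ for suitable $n,j,m$, so $\lspan{E}=\bigcup_{n,j,m}Y_{n,j,m}$; this countable union is therefore both $\sigma$-$\weakstar$-LRC and $\weakstar$-$K_\sigma$.

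I expect the main obstacle to be securing the perfectness demanded by Theorem~\ref{perfectimage}: on an unbounded domain the linear-combination map need not have compact fibres (a vanishing component $f_i=0$ leaves $\lambda_i$ unconstrained) nor be closed. This is exactly why the scalars are truncated to $[-j,j]^n$ and $E$ is replaced by its $\weakstar$-compact pieces $K_m$, rendering the domain $\weakstar$-compact---and it is here that the $\weakstar$-$K_\sigma$ hypothesis is indispensable, consistent with the paper's remark that some compactness is needed for good permanence behaviour. The two permanence facts and the two continuity checks are routine.
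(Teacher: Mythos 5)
Your proposal is correct and follows essentially the same route as the paper's own proof: decompose $E$ into an increasing sequence of $\weakstar$-compact, $\sigma$-$\weakstar$-LRC pieces, form finite products with scalar cubes (Proposition~\ref{prodLRC}), and push forward through the linear-combination map, whose $\tau$-$\weakstar$ perfectness is automatic from compactness of the domain, so that Theorem~\ref{perfectimage} yields the result. The only difference is cosmetic: the paper uses a single index $p$ (for the number of summands, the scalar bound, and the power of $K_n$) where you use three separate indices $n$, $j$, $m$.
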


As a particular application of Theorem \ref{linspan}, if the set $E \subseteq X^*$ possesses a single $w^*$-accumulation point $0$, then $E \cup \{0\}$ is the union of two $w^*$-discrete sets $E$ and $\{0\}$, and countably many $w^*$-compact sets $\set{f \in E}{\n{f} \leqslant n} \cup \{0\}$, $n \in \N$. Consequently, $\lspan{E}$ is $\sigma$-$\weakstar$-LRC and $\weakstar$-$K_\sigma$. Hence the linear structure of the M-bases underlying Example \ref{weakstarlrcex} (3) is not needed:~given such a basis $(e_\gamma,e^*_\gamma)$, we see that $0$ is the only $w^*$-accumulation point of $\set{e^*_\gamma}{\gamma \in \Gamma}$.

The next corollary is used in \cite[Section 7]{st:18}, as part of the proof of the result mentioned in Section \ref{sect_introduction}, concerning the approximation by $\mathscr{C}^\infty$-smooth and polyhedral norms of arbitrary equivalent norms on certain $C(K)$-spaces.

\begin{cor} Let $K$ be a $\sigma$-discrete compact space. Then the linear subspace of measures in $C(K)^*$ supported on finitely many points of $K$ is $\sigma$-$w^*$-LRC.
\end{cor}

\begin{proof} Apply Theorem \ref{linspan} to the linear span of the subset in Example \ref{weakstarlrcex} (4), which is $w^*$-compact.
\end{proof}

The next corollary follows immediately, using Theorem \ref{tb}.

\begin{cor}\label{bdy-lin-span}
Let $E\subseteq X^*$ be $\sigma$-$\weakstar$-LRC and $\weakstar$-$K_\sigma$, and suppose that $(X,\ndot)$ admits a boundary contained in $\lspan{E}$. Then $X$ admits norms as in Theorem \ref{tb}.
\end{cor}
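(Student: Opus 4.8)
The plan is to build, from the given data, a single boundary of $(X,\ndot)$ that satisfies the hypotheses of Theorem~\ref{tb} outright, and then invoke that theorem. First I would feed $E$ into Theorem~\ref{linspan}: as $E$ is $\sigma$-$\weakstar$-LRC and $\weakstar$-$K_\sigma$, its span $\lspan{E}$ is again $\sigma$-$\weakstar$-LRC and $\weakstar$-$K_\sigma$. The hypothesis furnishes a boundary $B$ of $(X,\ndot)$ with $B\subseteq\lspan{E}$, and one is tempted to conclude at once that $B$ inherits both properties; this is the one point that requires care.

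The subtlety is that the two properties pass to subsets quite differently. Being $\sigma$-$(\weakstar,\ndot)$-LRC is hereditary: if $F\subseteq G$ with $G$ being $(\weakstar,\ndot)$-LRC and $x\in F$, then the $\weakstar$-open set $U\ni x$ provided by $G$ satisfies $\cl{F\cap U}^{\ndot}\subseteq\cl{G\cap U}^{\ndot}$, a norm-closed subset of a norm-compact set and hence norm-compact; splitting into countably many pieces transfers heredity to the $\sigma$-version. Thus $B$ is automatically $\sigma$-$\weakstar$-LRC. The property of being $\weakstar$-$K_\sigma$, on the other hand, is \emph{not} inherited by arbitrary subsets, because a subset of a $\weakstar$-compact set need not be $\weakstar$-closed; so I cannot simply assert that $B$ is $\weakstar$-$K_\sigma$.

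To get around this I would enlarge the boundary instead of using $B$ as given, setting $B'=\lspan{E}\cap B_{X^*}$. Since every boundary lies in $B_{X^*}$ by definition, we have $B\subseteq\lspan{E}\cap B_{X^*}=B'$, and any subset of $B_{X^*}$ containing a boundary is again a boundary; hence $B'$ is a boundary of $(X,\ndot)$. As $B'\subseteq\lspan{E}$, it stays $\sigma$-$\weakstar$-LRC by the heredity above. For the $K_\sigma$ property, write $\lspan{E}=\bigcup_{n}K_n$ with each $K_n$ being $\weakstar$-compact; then $B'=\bigcup_{n}(K_n\cap B_{X^*})$, and each $K_n\cap B_{X^*}$ is an intersection of two $\weakstar$-compact sets (the ball $B_{X^*}$ being $\weakstar$-compact by Banach--Alaoglu), hence $\weakstar$-compact, so $B'$ is $\weakstar$-$K_\sigma$. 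Theorem~\ref{tb} then applies to $B'$ and delivers the conclusion. The only genuine obstacle is the failure of heredity for $\weakstar$-$K_\sigma$; the enlargement $B'$ resolves it cleanly, restoring that property by intersecting with the compact dual ball while preserving $\sigma$-$\weakstar$-LRC automatically.
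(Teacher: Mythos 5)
Your proposal is correct and follows the same route as the paper: apply Theorem \ref{linspan} to conclude that $\lspan{E}$ is $\sigma$-$\weakstar$-LRC and $\weakstar$-$K_\sigma$, then invoke Theorem \ref{tb}. The paper presents the corollary as immediate, so your careful treatment of the one non-trivial point --- that $\weakstar$-$K_\sigma$ is not hereditary, remedied by passing to the enlarged boundary $\lspan{E}\cap B_{X^*}$, which is still $\sigma$-$\weakstar$-LRC by heredity and is $\weakstar$-$K_\sigma$ since each $K_n\cap B_{X^*}$ is $\weakstar$-compact --- is exactly the detail the paper leaves implicit, and you handle it correctly.
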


This leads to two further corollaries.

\begin{cor}[cf.\ {\cite[Corollary 3.5]{bible:14} and \cite[Corollary 14]{fpst:14}}]\label{M-basis} If $(e_\gamma,e^*_\gamma)_{\gamma \in \Gamma}$ is an M-basis of $X$, and $(X,\ndot)$ has a boundary contained in $\aspan(e^*_\gamma)_{\gamma \in \Gamma}$, then $X$ admits norms as in Theorem \ref{tb}.
\end{cor}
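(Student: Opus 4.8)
The plan is to reduce everything to Corollary \ref{bdy-lin-span} by taking $E = \set{e^*_\gamma}{\gamma \in \Gamma}$. By hypothesis the boundary is contained in $\aspan(e^*_\gamma)_{\gamma \in \Gamma} = \lspan{E}$, so once I verify that $E \cup \{0\}$ is both $\sigma$-$\weakstar$-LRC and $\weakstar$-$K_\sigma$, Corollary \ref{bdy-lin-span} delivers the conclusion at once. This is exactly the situation flagged in the remarks following Theorem \ref{linspan}.

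The one computation to carry out is that $0$ is the unique $\weakstar$-accumulation point of $E$. I would argue as follows: if $f$ is such a point, then, the $\weakstar$-topology being Hausdorff, each basic $\weakstar$-neighbourhood $\set{g \in X^*}{|g(e_\beta) - f(e_\beta)| < \ep}$ (for fixed $\beta \in \Gamma$) contains $e^*_\gamma$ for infinitely many $\gamma$. Since biorthogonality gives $e^*_\gamma(e_\beta) = 0$ for every $\gamma \neq \beta$, at least one such $e^*_\gamma$ satisfies $|f(e_\beta)| = |e^*_\gamma(e_\beta) - f(e_\beta)| < \ep$; as $\ep > 0$ is arbitrary, $f(e_\beta) = 0$. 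Since $\beta$ is arbitrary and $X = \cl{\aspan}^{\ndot}(e_\gamma)_{\gamma \in \Gamma}$, this yields $f = 0$. The same biorthogonality shows each $e^*_\beta$ is $\weakstar$-isolated in $E$, the neighbourhood $\set{g \in X^*}{|g(e_\beta) - 1| < \tfrac{1}{2}}$ meeting $E$ only in $e^*_\beta$; hence $E$ is $\weakstar$-relatively discrete and so $\weakstar$-LRC by Example \ref{weakstarlrcex} (1). Adjoining $\{0\}$ leaves the set $\sigma$-$\weakstar$-LRC.

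It then remains to observe that $E \cup \{0\}$ is $\weakstar$-$K_\sigma$. Setting $K_n = \set{e^*_\gamma}{\n{e^*_\gamma} \leqslant n} \cup \{0\}$, each $K_n$ sits inside the $\weakstar$-compact ball $nB_{X^*}$ and, having $0 \in K_n$ as its only possible $\weakstar$-accumulation point, is $\weakstar$-closed and therefore $\weakstar$-compact; thus $E \cup \{0\} = \bigcup_{n} K_n$ is $\weakstar$-$K_\sigma$. Feeding $E \cup \{0\}$ into Theorem \ref{linspan} shows that $\lspan{E} = \lspan{E \cup \{0\}}$ is again $\sigma$-$\weakstar$-LRC and $\weakstar$-$K_\sigma$, and Corollary \ref{bdy-lin-span} then finishes the proof. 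I anticipate no real obstacle here: the entire argument hinges on the accumulation-point computation above, and everything else is a direct appeal to results already established. (Note that $E$ on its own need not be $\weakstar$-$K_\sigma$, since $0 \notin E$; this is precisely why the point $0$ must be adjoined before invoking Theorem \ref{linspan}.)
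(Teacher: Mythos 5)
Your proposal is correct and is essentially identical to the paper's proof, which reads ``Apply Corollary \ref{bdy-lin-span}, together with the example after Theorem \ref{linspan}'' --- that example being precisely your observation that $0$ is the unique $\weakstar$-accumulation point of $\set{e^*_\gamma}{\gamma\in\Gamma}$, so that adjoining $0$ yields a set that is both $\sigma$-$\weakstar$-LRC (union of two relatively $\weakstar$-discrete sets) and $\weakstar$-$K_\sigma$ (via the sets $\set{e^*_\gamma}{\n{e^*_\gamma}\leqslant n}\cup\{0\}$). You have simply written out in full the accumulation-point and compactness verifications that the paper leaves implicit.
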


\begin{proof}
Apply Corollary \ref{bdy-lin-span}, together with the example after Theorem \ref{linspan}.
\end{proof}

The final corollary has implications for LFC norms. 

\begin{cor}\label{LFC}
Let $X$ be a Banach space and let $\ndot$ be a LFC-$E$ norm, where $E\subseteq X^*$ is $\sigma$-$\weakstar$-LRC and $\weakstar$-$K_\sigma$. Then $X$ admits norms as in Theorem \ref{tb}.
\end{cor}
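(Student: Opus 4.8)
The plan is to reduce the statement to Corollary \ref{bdy-lin-span} by producing a boundary of $(X,\ndot)$ that is contained in $\lspan{E}$. Given $x \in X \setminus \{0\}$, the Hahn-Banach Theorem furnishes a supporting functional $g \in S_{X^*}$, i.e.\ $\n{g}=1$ and $g(x)=\n{x}$. I would collect all such supporting functionals (one for each non-zero $x$) into a set $B \subseteq B_{X^*}$; by construction $B$ is a boundary of $(X,\ndot)$. The entire content of the proof is therefore to verify that $B \subseteq \lspan{E}$.

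To establish this, fix $x \neq 0$ and let $U \ni x$, $\oneton{f}{n} \in E$ and $\map{\Phi}{\R^n}{\R}$ witness the LFC-$E$ property, so that $\n{y} = \Phi(f_1(y),\dots,f_n(y))$ for $y \in U$. Set $N = \bigcap_{i=1}^n \ker f_i$. For $h \in N$ and $t$ small we have $f_i(x+th)=f_i(x)$ for every $i$, whence $\n{x+th}=\n{x}$; that is, the norm is locally constant in every direction drawn from $N$. If $g$ is any supporting functional at $x$, then $g(x)+tg(h)=g(x+th) \leqslant \n{x+th}=\n{x}=g(x)$ for all small $t$ of either sign, which forces $g(h)=0$. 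Thus $g$ annihilates $N=\bigcap_{i=1}^n \ker f_i$, and the elementary fact that a functional vanishing on a finite intersection of kernels lies in the span of the corresponding functionals yields $g \in \lspan{\{\oneton{f}{n}\}} \subseteq \lspan{E}$.

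Since every supporting functional collected in $B$ lies in $\lspan{E}$, we conclude $B \subseteq \lspan{E}$. As $E$ is $\sigma$-$\weakstar$-LRC and $\weakstar$-$K_\sigma$ by hypothesis, Corollary \ref{bdy-lin-span} then applies directly to give the norms as in Theorem \ref{tb}. The only delicate point is the annihilation argument of the second paragraph; everything else is bookkeeping, so I expect the main (and essentially only) obstacle to be checking carefully that the local constancy of the norm along $N$ indeed drives all supporting functionals into $\lspan{E}$.
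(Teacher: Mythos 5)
Your proposal is correct and takes essentially the same route as the paper: the paper also reduces to Corollary \ref{bdy-lin-span} via the fact that every norm-attaining element of $S_{X^*}$ lies in $\aspan(E)$, except that it cites this fact from the proof of Theorem 1 of \cite{hajek:95} rather than proving it. Your second paragraph is a correct self-contained proof of that cited fact (local constancy of the norm along $N=\bigcap_{i=1}^n \ker f_i$ forces any supporting functional at $x$ to annihilate $N$, hence to lie in $\aspan\{f_1,\dots,f_n\} \subseteq \aspan(E)$), so your argument merely unpacks the reference.
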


\begin{proof}
In the proof of \cite[Theorem 1]{hajek:95}), it is shown that if a norm $\ndot$ is LFC-$E$, where $E \subseteq X^*$ is some set, then every norm-attaining element of $S_{X^*}$ is an element of $\aspan(E)$. Consequently, $X$ has a boundary contained in $\aspan(E)$. If $E$ is $\sigma$-$\weakstar$-LRC and $\weakstar$-$K_\sigma$, we can thus apply Corollary \ref{bdy-lin-span}.
\end{proof}

In order to prove Theorems \ref{perfectimage} and \ref{linspan}, we require some machinery that is based on Haydon's analysis of locally uniformly rotund norms on $C(K)$, where $K$ is a so-called Namioka--Phelps compact space \cite{haydon:08}. Lemma \ref{minimal} and its proof can be found in \cite{ks:15}, but it is essentially due to Haydon. Let $X$ be a Hausdorff space, such that $X=\bigcup_{n=1}^\infty H_n$, where each $H_n$ is open in its closure. Let
\[
\Sigma \;=\; \set{\sigma=(n_1,n_2,\dots,n_k)}{n_1 < n_2 < \dots < n_k,\, k \in \N}.
\]
Totally order $\Sigma$ be declaring that $\sigma \prec \sigma'$ if and only if $\sigma$ properly extends $\sigma'$, or if there exists $k \in \N$ such that the $i$th entries $n_i$ and $n'_i$ of $\sigma$ and $\sigma'$, respectively, are defined for $i \leqslant k$, agree whenever $i < k$, and $n_k < n'_k$. This is the {\em Kleene--Brouwer ordering} on $\Sigma$ and not the lexicographic order.

Given $\sigma=(n_1,n_2,\dots,n_k) \in \Sigma$, let
\begin{align*}
H_\sigma &\;=\; (\cl{H_{n_1}}\setminus H_{n_1}) \cap \dots \cap (\cl{H_{n_{k-1}}}\setminus H_{n_{k-1}}) \cap H_{n_k}\\
\text{and}\quad \hat{H}_\sigma &\;=\; (\cl{H_{n_1}}\setminus H_{n_1}) \cap \dots \cap (\cl{H_{n_{k-1}}}\setminus H_{n_{k-1}}) \cap \cl{H_{n_k}}.
\end{align*}
Evidently, $H_\sigma \subseteq \hat{H}_\sigma$ and $\hat{H}_\sigma$ is closed.

\begin{lem}[{\cite[Lemma 3.6]{ks:15}, \cite[Lemma 3.3]{haydon:08}}]\label{minimal}
Let $M \subseteq X$ be non-empty and compact. Then there exists minimal $\sigma \in \Sigma$ such that $M \cap \hat{H_\sigma}$ is non-empty. Moreover, for this element $\sigma$, we have $M \cap H_\sigma = M \cap \hat{H}_\sigma$.
\end{lem}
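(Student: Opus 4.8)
The plan is to construct the $\prec$-\emph{least} element of
\[
S \;=\; \set{\sigma \in \Sigma}{M \cap \hat{H}_\sigma \neq \emptyset},
\]
which is in particular minimal, and then to deduce the ``moreover'' clause directly from leastness. That $S \neq \emptyset$ is immediate: any $x \in M$ lies in some $H_n$, since the $H_n$ cover $X$, so $x \in \cl{H_n} = \hat{H}_{(n)}$ and $(n) \in S$.

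To build the least element I would pass to the tree $T \subseteq \Sigma$ of all initial segments of members of $S$, and grow a sequence $a_1 < a_2 < \dots$ greedily, always taking the \emph{smallest admissible} next coordinate: set $a_1 = \min\set{c}{(c) \in T}$ and, having chosen $(a_1,\dots,a_i) \in T$, stop if this sequence has no proper extension in $T$ (in which case, being an initial segment of some member of $S$ admitting no longer such segment, it must itself lie in $S$), and otherwise set $a_{i+1} = \min\set{c}{(a_1,\dots,a_i,c) \in T}$. Because the Kleene--Brouwer order makes proper extensions \emph{smaller} and, at a first disagreement, smaller entries \emph{smaller}, a routine case check shows that if the process halts at $\sigma^* = (a_1,\dots,a_k) \in S$, then $\sigma^* \prec \rho$ for every other $\rho \in S$; that is, $\sigma^*$ is the $\prec$-least, hence minimal, element of $S$.

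The crux is to show that the process must halt, and this is exactly where compactness of $M$ enters. If it ran forever it would produce an infinite branch $a_1 < a_2 < \dots$ such that, for each $i$, some $\sigma \in S$ properly extends $(a_1,\dots,a_i)$; such a $\sigma$ has $a_1,\dots,a_i$ among its non-final entries, so $\hat{H}_\sigma \subseteq \bigcap_{l \leqslant i}(\cl{H_{a_l}} \setminus H_{a_l})$, whence $M \cap \bigcap_{l \leqslant i}(\cl{H_{a_l}} \setminus H_{a_l}) \neq \emptyset$. Each set $\cl{H_{a_l}} \setminus H_{a_l}$ is closed (as $H_{a_l}$ is open in its closure), so these are nested, closed subsets of the compact $M$, and the finite intersection property yields a point $x$ lying in $\cl{H_{a_l}} \setminus H_{a_l}$ for \emph{every} $l$. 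Now $x \in H_p$ for some $p$, which is distinct from every $a_l$; locating $p$ among the $a_l$ (say $a_r < p < a_{r+1}$, the case $p < a_1$ being handled similarly) and forming $\tau = (a_1,\dots,a_r,p)$, one checks that $x \in \hat{H}_\tau \cap M$, so $\tau \in T$, while $p < a_{r+1}$ contradicts the minimality governing the choice of $a_{r+1}$. I expect this compactness-plus-minimality step to be the main obstacle, chiefly in the bookkeeping of where $p$ sits relative to the $a_l$.

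Finally, the ``moreover'' clause follows by the same device. If some $y \in M \cap \hat{H}_{\sigma^*}$ failed to lie in $H_{\sigma^*}$, then $y \in \cl{H_{a_k}} \setminus H_{a_k}$, so $y$ lies in $\cl{H_{a_l}} \setminus H_{a_l}$ for every $l \leqslant k$; taking $q$ with $y \in H_q$ and splicing it into $(a_1,\dots,a_k)$ at the appropriate position produces $\tau \prec \sigma^*$ with $M \cap \hat{H}_\tau \neq \emptyset$, contradicting the leastness of $\sigma^*$. Hence $M \cap \hat{H}_{\sigma^*} = M \cap H_{\sigma^*}$, as required.
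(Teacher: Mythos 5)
Your proof is correct. Note that the paper itself does not prove this lemma --- it is quoted from \cite{ks:15} and \cite{haydon:08} --- and your argument is essentially the same one found there: a greedy construction that always picks the smallest admissible next index, with compactness entering via the finite intersection property applied to the nested closed sets $M \cap \bigcap_{l \leqslant i}(\cl{H_{a_l}} \setminus H_{a_l})$ to show the construction terminates; your packaging of the recursion in terms of the tree of prefixes of $S$ and Kleene--Brouwer leastness is only a cosmetic difference from the direct index-by-index formulation in those references.
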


Armed with this result, we can present the proof of Theorem \ref{perfectimage}.

\begin{proof}[Proof of Theorem \ref{perfectimage}]
Let $X=\bigcup_{n=1}^\infty H_n$, where each $H_n$ is $(\tau,\rho)$-LRC. By Proposition \ref{prop_lrc_facts} (3), we can assume that each $H_n$ is $\tau$-open in $\cl{H_n}^\tau$. Let $\Sigma$ be as above, and let
\[
M_\sigma \;=\; \set{t \in Y}{\pi^{-1}(t) \cap H_\sigma = \pi^{-1}(t) \cap \hat{H}_\sigma \neq \varnothing}.
\]
Since $\pi^{-1}(t)$ is non-empty and $\tau$-compact, by Lemma \ref{minimal}, we know that $Y = \bigcup_{\sigma\in\Sigma} M_\sigma$.
Our aim now is to prove that each $M_\sigma$ is $(\tau',\rho')$-LRC. Fix $t \in M_\sigma$. Given any $s \in \pi^{-1}(t) \cap \hat{H}_\sigma$, there exists $\tau$-open $U_s \ni s$ such that $\cl{U_s \cap H_\sigma}^\rho$ is $\rho$-compact. By $\tau$-compactness, there exist $\oneton{s}{k} \in \pi^{-1}(t) \cap \hat{H}_\sigma$ such that if $U = \bigcup_{i=1}^k U_{s_i}$, then $\pi^{-1}(t) \cap \hat{H}_\sigma \subseteq U$. Moreover,
\[
\cl{U \cap H_\sigma}^\rho \;=\; \cl{\bigg(\bigcup_{i=1}^k U_{s_i} \cap H_\sigma\bigg)}^\rho
\;=\; \bigcup_{i=1}^k \cl{U_{s_i} \cap H_\sigma}^\rho,
\]
which is $\rho$-compact.

Now $\hat{H}_\sigma\setminus U$ is $\tau$-closed, so $W=Y\setminus \pi(\hat{H}_\sigma\setminus U)$ is $\tau'$-open in $M$ because $\pi$ is $\tau$-$\tau'$ perfect. We claim that $t \in W$ and $\cl{W \cap M_\sigma}^{\rho'}$ is $\rho'$-compact. Indeed, $t \in W$ because $\pi^{-1}(t) \cap \hat{H}_\sigma\setminus U = \varnothing$, which implies $t \notin \pi(\hat{H}_\sigma\setminus U)$. Given that $\pi$ is $(\rho,\rho')$-continuous, the second assertion follows from the fact that $W \cap M_\sigma \subseteq \pi(U \cap H_\sigma)$. If $r \in W \cap M_\sigma
=M_\sigma\setminus\pi(\hat{H}_\sigma\setminus U)$, then $\pi^{-1}(r) \cap H_\sigma = \pi^{-1}(r) \cap \hat{H}_\sigma \neq \varnothing$, and $r \notin \pi(\hat{H}_\sigma\setminus U)$, so $\pi^{-1}(r)\cap \hat{H}_\sigma\setminus U = \varnothing$, i.e.\ $\pi^{-1}(r)\cap \hat{H}_\sigma \subseteq U$. Pick an element $v \in \pi^{-1}(r)\cap \hat{H}_\sigma = \pi^{-1}(r) \cap H_\sigma$. Then $v \in U \cap H_\sigma$, giving $r=\pi(v) \subseteq \pi(U \cap H_\sigma)$.
\end{proof}

We require two more results before we can give the proof of Theorem \ref{linspan}.

\begin{cor}\label{ksigma-image}
Let $\map{\mpi}{X}{Y}$ be a $\tau$-$\tau'$ and $\rho$-$\rho'$ continuous map, and let $E \subseteq X$ be $\sigma$-$(\tau,\rho)$-LRC and $\sigma$-$\tau$-compact. Then $\pi(E)$ is $\sigma$-$(\tau',\rho')$-LRC and $\sigma$-$\tau'$-compact.
\end{cor}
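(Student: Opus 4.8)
The plan is to reduce to Theorem \ref{perfectimage} by restricting $\pi$ to the $\tau$-compact pieces of $E$, where ordinary continuity from a compact space already forces the map to be perfect. Since $E$ is $\sigma$-$\tau$-compact, write $E = \bigcup_{n} K_n$ with each $K_n$ being $\tau$-compact. Because $E$ is also $\sigma$-$(\tau,\rho)$-LRC and any subset $F$ of a $(\tau,\rho)$-LRC set is again $(\tau,\rho)$-LRC (the same $\tau$-neighbourhood $U$ works, as $\cl{F \cap U}^\rho \subseteq \cl{E \cap U}^\rho$), each $K_n$ inherits the property of being $\sigma$-$(\tau,\rho)$-LRC. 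Since $\pi(E) = \bigcup_n \pi(K_n)$ and a countable union of $\sigma$-$(\tau',\rho')$-LRC (resp.\ $\sigma$-$\tau'$-compact) sets is of the same type, it suffices to treat each $\pi(K_n)$ separately.

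First I would record the observation that drives the whole argument: since $\tau$ is Hausdorff and $\rho$ is finer than $\tau$, every $\tau$-compact set is $\tau$-closed and hence $\rho$-closed, and likewise every $\tau'$-compact set is $\rho'$-closed. This lets me move freely between the LRC property computed in the ambient spaces and the same property computed in the subspace topologies. Indeed, for a $\rho$-closed set the ambient $\rho$-closure of any of its subsets already lies inside it, so it coincides with the subspace $\rho$-closure, and $\rho$-compactness is intrinsic. Consequently $K_n$, viewed as a topological space with the subspace topologies $\tau|_{K_n}$ and $\rho|_{K_n}$, is $\sigma$-$(\tau|_{K_n},\rho|_{K_n})$-LRC, and the analogous equivalence holds for the image.

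Next I would verify that $\pi|_{K_n} : K_n \to \pi(K_n)$ satisfies the hypotheses of Theorem \ref{perfectimage} for these subspace topologies. It is $\rho|_{K_n}$-$\rho'|_{\pi(K_n)}$ continuous as a restriction of a $\rho$-$\rho'$ continuous map, and surjective by construction. The key point is that it is $\tau|_{K_n}$-$\tau'|_{\pi(K_n)}$ perfect: each fibre is a $\tau$-closed subset of the $\tau$-compact set $K_n$, hence $\tau$-compact; and the map is closed because any $\tau|_{K_n}$-closed set is $\tau$-compact, its image is $\tau'$-compact by continuity, and $\tau'$-compact sets are $\tau'$-closed since $\tau'$ is Hausdorff. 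Applying Theorem \ref{perfectimage} yields that $\pi(K_n)$ is $\sigma$-$(\tau'|_{\pi(K_n)}, \rho'|_{\pi(K_n)})$-LRC.

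Finally, since $\pi(K_n)$ is the continuous image of a $\tau$-compact set it is $\tau'$-compact, hence $\rho'$-closed; by the observation above, being $\sigma$-$(\tau'|_{\pi(K_n)},\rho'|_{\pi(K_n)})$-LRC as a space is the same as being $\sigma$-$(\tau',\rho')$-LRC as a subset of $Y$. Taking the union over $n$ shows $\pi(E)$ is $\sigma$-$(\tau',\rho')$-LRC, and it is $\sigma$-$\tau'$-compact because each $\pi(K_n)$ is $\tau'$-compact. I expect the main subtlety to lie not in any single estimate but in the careful bookkeeping between ambient and subspace topologies; the remark that $\tau$-compact sets are automatically $\rho$-closed (and $\tau'$-compact sets $\rho'$-closed) is precisely what forces these two viewpoints to coincide, and is the linchpin that makes the reduction to Theorem \ref{perfectimage} legitimate.
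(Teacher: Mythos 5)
Your proof is correct and follows essentially the same route as the paper's: decompose $E$ into $\tau$-compact pieces $K_n$, observe that $\pi$ restricted to $K_n$ is $\tau$-$\tau'$ perfect onto its image, and apply Theorem \ref{perfectimage}. The paper states this in one line; your additional bookkeeping (subsets of LRC sets are LRC, and ambient versus subspace LRC notions coincide because $\tau$-compact sets are $\rho$-closed) is exactly the detail implicitly used there.
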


\begin{proof} If $E = \bigcup_{n=1}^\infty K_n$, where each $K_n$ is $\tau$-compact, then $\pi$ restricted to $K_n$ is $\tau$-$\tau'$ perfect onto its image, so $\pi(K_n)$ is $\sigma$-$(\tau',\rho')$-LRC, and is evidently $\tau'$-compact.
\end{proof}

The next result we state without proof.

\begin{prop}\label{prodLRC}
Let $X_i$, $1 \leqslant i \leqslant n$, be sets and $\tau_i$ and $\rho_i$ Hausdorff topologies on $X_i$, with $\rho_i$ finer than $\tau_i$. If $E_i \subseteq X_i$ is $(\tau_i,\rho_i)$-LRC, then $\prod_{i=1}^n E_i$ is $(\tau,\rho)$-LRC, where $\tau$ and $\rho$ are the product topologies of the $\tau_i$ and $\rho_i$, respectively.
\end{prop}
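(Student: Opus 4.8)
The plan is to verify Definition \ref{lrc} directly for $E := \prod_{i=1}^n E_i$ at an arbitrary point, reducing compactness of the relevant $\rho$-closure to compactness in each coordinate. Fix $x=(x_1,\dots,x_n) \in E$. For each $i$, since $E_i$ is $(\tau_i,\rho_i)$-LRC, I would choose a $\tau_i$-open set $U_i \ni x_i$ such that $\cl{E_i \cap U_i}^{\rho_i}$ is $\rho_i$-compact. Setting $U := \prod_{i=1}^n U_i$ then produces a $\tau$-open neighbourhood of $x$ in the product topology, and this is the natural candidate for the neighbourhood required by the definition.

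The key step is to compute the $\rho$-closure of $E \cap U$. First observe that $E \cap U = \prod_{i=1}^n (E_i \cap U_i)$, since intersection commutes with products coordinatewise. I would then invoke the standard fact that, in any product topology, the closure of a product of sets equals the product of the closures of those sets; applied here, this gives
\[
\cl{E \cap U}^{\rho} \;=\; \prod_{i=1}^n \cl{E_i \cap U_i}^{\rho_i}.
\]
The right-hand side is a finite product of $\rho_i$-compact sets, hence $\rho$-compact (for a finite product this needs only the elementary fact that a product of finitely many compact spaces is compact, so Tychonoff's theorem is not required). This shows that $\cl{E \cap U}^{\rho}$ is $\rho$-compact, completing the verification that $E$ is $(\tau,\rho)$-LRC.

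The only point requiring any care --- and the nearest thing to an obstacle --- is the closure identity $\cl{\prod_i A_i}=\prod_i \cl{A_i}$ for the product topology. The inclusion $\subseteq$ is immediate, since $\prod_i \cl{A_i}$ is $\rho$-closed and contains $\prod_i A_i$; for the reverse inclusion one checks that every basic $\rho$-open neighbourhood of a point $(a_i) \in \prod_i \cl{A_i}$, being a product of $\rho_i$-open sets each meeting the corresponding $A_i$, must meet $\prod_i A_i$. As this is a purely topological fact, independent of the LRC structure, I anticipate no genuine difficulty. Should one prefer to sidestep the closure identity altogether, an alternative is to argue by induction on $n$, establishing the case $n=2$ and iterating; but the direct computation above appears cleanest.
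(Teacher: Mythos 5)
Your proof is correct. Note that the paper states this proposition explicitly without proof (``The next result we state without proof''), so there is no argument of the authors to compare against; your direct verification --- taking $U = \prod_{i=1}^n U_i$, using $E \cap U = \prod_{i=1}^n (E_i \cap U_i)$, the identity $\cl{\prod_i A_i}^{\rho} = \prod_i \cl{A_i}^{\rho_i}$ for product topologies, and the elementary fact that a finite product of compact sets is compact --- is precisely the standard argument the omission presupposes, and you are right that Tychonoff's theorem in its full (choice-dependent) strength is not needed here.
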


\begin{proof}[Proof of Theorem \ref{linspan}]
Since $E$ is $\weakstar$-$K_\sigma$, we can express $E$ as a union of an increasing sequence of $\weakstar$-compact sets $K_n$, $n \geqslant 1$, each of which being $\sigma$-$\weakstar$-LRC. Then $\lspan{E} = \bigcup_{n=1}^\infty \lspan{K_n}$, so it suffices to show that $\lspan{K_n}$ is $\sigma$-$\weakstar$-LRC and $\weakstar$-$K_\sigma$ for each $n$. Fix $n$ for the rest of the proof, and given $p \in \N$, define
\[
M_p \;=\; K_n^p \times [-p,p]^p \quad\text{and}\quad L_p \;=\; \set{\sum_{i=1}^p a_i f_i}{\oneton{f}{p} \in K_n \text{ and }|a_1|,\dots,|a_p| \leqslant p}.
\]
Since $K_n$ is $\sigma$-$\weakstar$-LRC and $[-p,p]$ is LRC with respect to the usual topology, it follows from Proposition \ref{prodLRC} that $M_p$ is $\sigma$-$(\tau,\rho)$-LRC, where $\tau$ are $\rho$ are the corresponding product topologies. Evidently, the surjective map $\map{\pi}{M_p}{L_p}$ given by
\[
\pi(f_1,\dots,f_p,a_1,\dots,a_p) \;=\; \sum_{i=1}^p a_i f_i,
\]
is both $\tau$-$\weakstar$ and $\rho$-$\ndot$ continuous. Since $M_p$ is $\tau$-compact, $\pi$ is $\tau$-$\weakstar$ perfect as well. According to Theorem \ref{perfectimage}, $L_p$ is $\sigma$-$\weakstar$-LRC. This holds for all $p \in \N$. Since $L_p$ is also $\weakstar$-compact, it follows that $\lspan{K_n} = \bigcup_{p=1}^\infty L_p$ is $\sigma$-$\weakstar$-LRC and $\weakstar$-$K_\sigma$, as required.
\end{proof}

We conclude the first part of this section with two more corollaries. The first corollary is motivated by the fact that isomorphic polyhedrality and $\mathscr{C}^\infty$-smoothness are properties preserved by isomorphic embeddings.

\begin{cor}
Let $X$ and $Y$ be Banach spaces and $\map{T}{X}{Y}$ an isomorphic embedding. If $(Y,\ndot)$ has a boundary that is contained in a $\sigma$-$\weakstar$-LRC and $\weakstar$-$K_\sigma$ set, then so does $(X,\tndot)$, where the equivalent norm $\tndot$ is defined by
$\tn{x} = \n{T(x)}$.
\end{cor}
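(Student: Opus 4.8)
The plan is to transport a boundary of $(Y,\ndot)$ across to $(X,\tndot)$ using the adjoint map, and then to apply Corollary \ref{ksigma-image} directly. First I would fix a boundary $B$ of $(Y,\ndot)$ together with a set $E \subseteq \dual{Y}$ that is $\sigma$-$\weakstar$-LRC and $\weakstar$-$K_\sigma$ and satisfies $B \subseteq E$. The natural candidate for a boundary of $(X,\tndot)$ is the image $\dual{T}(B)$ under the adjoint $\map{\dual{T}}{\dual{Y}}{\dual{X}}$, and the set $\dual{T}(E)$ is the natural candidate for the ambient $\sigma$-$\weakstar$-LRC and $\weakstar$-$K_\sigma$ set containing it.

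Next I would check that $\dual{T}(B)$ really is a boundary of $(X,\tndot)$. Given $x \in X$, since $B$ is a boundary of $(Y,\ndot)$ there exists $g \in B$ with $g(T(x)) = \n{T(x)} = \tn{x}$; as $(\dual{T}g)(x) = g(T(x))$, the functional $\dual{T}g$ attains the value $\tn{x}$ at $x$. It then remains to confirm that $\dual{T}(B)$ lies in the dual unit ball of $(X,\tndot)$. Writing $\tndot^*$ for the dual norm on $\dual{X}$, for $g \in B$ one has $\tn{\dual{T}g}^* = \sup\set{g(T(x))}{\tn{x} \leqslant 1} = \sup\set{g(y)}{y \in T(X),\ \n{y} \leqslant 1} \leqslant \n{g} \leqslant 1$, where the middle equality uses that $T$ is an isomorphic embedding, so that $\set{T(x)}{\tn{x} \leqslant 1}$ is exactly the unit ball of the range subspace $T(X)$. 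Hence $\dual{T}(B) \subseteq \ball{\dual{X}}$ and $\dual{T}(B)$ is a boundary of $(X,\tndot)$.

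Finally I would apply Corollary \ref{ksigma-image} to the adjoint. The key observation is that $\dual{T}$ is simultaneously $\weakstar$-$\weakstar$ continuous (being an adjoint) and norm-norm continuous (being bounded). Taking $\tau=\tau'=\weakstar$ and $\rho=\rho'$ the respective norm topologies on $\dual{Y}$ and $\dual{X}$, and recalling that the hypothesis on $E$ says precisely that $E$ is $\sigma$-$(\weakstar,\ndot)$-LRC and $\sigma$-$\weakstar$-compact, the corollary yields that $\dual{T}(E)$ is $\sigma$-$\weakstar$-LRC and $\weakstar$-$K_\sigma$. Since $\dual{T}(B) \subseteq \dual{T}(E)$, the boundary $\dual{T}(B)$ of $(X,\tndot)$ is contained in a set with the required properties.

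I do not expect a serious obstacle: the result is essentially a one-line application of Corollary \ref{ksigma-image} once the adjoint is identified as the relevant map, and the whole argument hinges on the simple fact that adjoints are automatically $\weakstar$-$\weakstar$ and norm-norm continuous. The only point demanding any care is the dual-norm estimate $\tn{\dual{T}g}^* \leqslant 1$, which is where the isomorphic-embedding hypothesis enters, via the identification of $\set{T(x)}{\tn{x}\leqslant 1}$ with the unit ball of $T(X)$.
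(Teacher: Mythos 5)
Your proposal is correct and matches the paper's proof exactly: the paper likewise applies Corollary \ref{ksigma-image} to the adjoint $\map{\dual{T}}{\dual{Y}}{\dual{X}}$ and observes that $\dual{T}(B)$ is a boundary of $(X,\tndot)$. The only difference is that you spell out the boundary verification and the dual-norm estimate, which the paper leaves as ``easy to see.''
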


\begin{proof}
Apply Corollary \ref{ksigma-image} to $\map{T^*}{Y^*}{X^*}$. If $B$ is a boundary of $Y$ with respect to $\ndot$, then it is easy to see that $T^*(B)$ is a boundary of $X$ with respect to $\tndot$.
\end{proof}

Finally, injective tensor products have been studied in the context of smooth and polyhedral norms. Let $X$ and $Y$ be Banach spaces, where the norm on $Y$ is either polyhedral or $\mathscr{C}^k$-smooth. In \cite{bible:14,fpst:08,haydon:96}, sufficient conditions on $X$ are given for the injective tensor product $\itp{X}{Y}$ to admit a norm of the same type. One such condition is that $X$ has a boundary that is both $\sigma$-$w^*$-LRC and $w^*$-$K_\sigma$. This observation motivates the final result of the section.

\begin{cor}
If $X$ and $Y$ are Banach space have norms that admit $\sigma$-$w^*$-LRC and $w^*$-$K_\sigma$ boundaries, then so does the natural tensor norm of $\itp{X}{Y}$. 
\end{cor}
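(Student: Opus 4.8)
The plan is to exhibit an explicit boundary of $\itp{X}{Y}$ of the form $B = \set{\tp{f}{g}}{f \in B_X,\ g \in B_Y}$, where $B_X \subseteq \dual{X}$ and $B_Y \subseteq \dual{Y}$ are the given boundaries, and then to recognise $B$ as the image of $B_X \times B_Y$ under the tensor map, to which Corollary \ref{ksigma-image} applies. Recall that the injective norm satisfies $\n{u}_\ep = \sup\set{|\langle u, \tp{f}{g}\rangle|}{f \in \ball{\dual X},\ g \in \ball{\dual Y}}$, and that each $\tp{f}{g}$ with $f \in B_X$, $g \in B_Y$ lies in $\ball{\dual{(\itp{X}{Y})}}$, so $B \subseteq \ball{\dual{(\itp{X}{Y})}}$.

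The first main step is to show that $B$ is a boundary. Fixing $u \in \itp{X}{Y}$, I would note that $(f,g) \mapsto \langle u, \tp{f}{g}\rangle$ is jointly $\weakstar\times\weakstar$-continuous on the compact set $\ball{\dual X}\times\ball{\dual Y}$ (it is so for elementary tensors, and passes to uniform limits since $|\langle u - u', \tp{f}{g}\rangle| \leqslant \n{u - u'}_\ep$), so that the supremum defining $\n{u}_\ep$ is attained at some $(f_0,g_0)$. I would then slide this pair into $B_X \times B_Y$ using the boundary property of each factor: writing $L_{g_0} u \in X$ and $R_{f_1} u \in Y$ for the natural contractions characterised by $\langle u, \tp{f}{g_0}\rangle = f(L_{g_0}u)$ and $\langle u, \tp{f_1}{g}\rangle = g(R_{f_1}u)$, one checks from $\n{L_{g_0}u} \leqslant \n{u}_\ep\n{g_0} \leqslant \n{u}_\ep$ and $f_0(L_{g_0}u) = \n{u}_\ep$ that $\n{L_{g_0}u} = \n{u}_\ep$, so there is $f_1 \in B_X$ with $f_1(L_{g_0}u) = \n{u}_\ep$; the same reasoning gives $\n{R_{f_1}u} = \n{u}_\ep$, hence $g_1 \in B_Y$ with $g_1(R_{f_1}u) = \n{u}_\ep$. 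Thus $\langle u, \tp{f_1}{g_1}\rangle = \n{u}_\ep$ with $\tp{f_1}{g_1} \in B$.

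The second main step is topological. Consider $\map{\pi}{\dual X \times \dual Y}{\dual{(\itp{X}{Y})}}$, $\pi(f,g) = \tp{f}{g}$, so that $B = \pi(B_X \times B_Y)$. I would equip the domain with the pair $(\tau,\rho)$ of product-$\weakstar$ and product-norm topologies, and the target with $(\weakstar,\ndot)$. Expressing each boundary as a countable union of $(\weakstar,\ndot)$-LRC sets and, separately, of $\weakstar$-compact sets, Proposition \ref{prodLRC} shows that $B_X \times B_Y$ is a countable union of products of LRC pieces, hence $\sigma$-$(\tau,\rho)$-LRC, while the products of the $\weakstar$-compact pieces show it is $\sigma$-$\tau$-compact. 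On bounded sets $\pi$ is both $\tau$-$\weakstar$ and $\rho$-$\ndot$ continuous, the latter from $\n{\tp{f}{g} - \tp{f'}{g'}} \leqslant \n{f - f'}\,\n{g} + \n{f'}\,\n{g - g'}$ and the former by the same approximation against elementary tensors used above; since $B_X \times B_Y \subseteq \ball{\dual X}\times\ball{\dual Y}$, I may regard $\pi$ as continuous on this bounded domain. Corollary \ref{ksigma-image} then yields that $B = \pi(B_X \times B_Y)$ is $\sigma$-$\weakstar$-LRC and $\weakstar$-$K_\sigma$, which completes the proof.

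I expect the main obstacle to be the attainment in the first step: the naive iterated supremum over $B_X$ and then $B_Y$ need not be attained in the outer variable, so one genuinely needs the preliminary compactness of $\ball{\dual X}\times\ball{\dual Y}$ to secure an attaining pair before sliding it into the non-compact boundaries $B_X$ and $B_Y$. A secondary technical point, easily handled, is that $\pi$ is continuous only on bounded sets rather than globally, which is precisely why it matters that the entire construction takes place inside $\ball{\dual X}\times\ball{\dual Y}$.
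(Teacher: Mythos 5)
Your proposal is correct and follows essentially the same route as the paper's proof: take the product $B_X \times B_Y$, note via Proposition \ref{prodLRC} that it is $\sigma$-$(\tau,\rho)$-LRC and $\sigma$-$\tau$-compact, and push it forward under the tensor map using Corollary \ref{ksigma-image}, exploiting that this map is simultaneously $w^*$- and norm-continuous on the product of dual balls. The only difference is one of detail rather than of method: you supply in full the compactness-plus-sliding argument showing that $\set{\tp{f}{g}}{(f,g) \in B_X \times B_Y}$ is a boundary of $\itp{X}{Y}$ (a fact the paper asserts without proof), and you explicitly handle the harmless point that the tensor map is $w^*$-continuous only on bounded sets, which the boundedness of boundaries makes irrelevant.
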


\begin{proof}
Let $B$ and $C$ be $\sigma$-$w^*$-LRC and $w^*$-$K_\sigma$ boundaries of the norms on $X$ and $Y$, respectively. Then $B\times C$ is a $\sigma$-$w^*$-LRC and $w^*$-$K_\sigma$ subset of $X^*\oplus Y^*$. The set
\[
\set{\tp{f}{g}}{(f,g) \in B\times C} \;\subseteq \; (\itp{X}{Y})^*,
\]
is a boundary of $\itp{X}{Y}$ with respect to the natural injective tensor norm. Moreover, it is a simultaneously norm and $w^*$-continuous image of $B\times C$, so it is $\sigma$-$w^*$-LRC and $w^*$-$K_\sigma$ by Corollary \ref{ksigma-image}.
\end{proof}

%
%

\section{The role of locally uniformly rotund dual norms}\label{sect_LUR}

In this section, we use more topological tools to give a necessary condition for the existence of a $\sigma$-$w^*$-LRC and $w^*$-$K_\sigma$ boundary. Recall that a norm $\ndot$ is {\em locally uniformly rotund (LUR)} if, given a unit vector $x \in X$ and a sequence $(x_n) \subseteq X$ such that $\n{x_n} \to 1$ and $\n{x+x_n}\to 2$, we have $\n{x-x_n}\to 0$.

The theory of LUR norms has become closely associated with topology (see \cite{motv:09}). The following definition appears in \cite{or:04,raja:02} (and in earlier papers of Raja). Let $K \subseteq X^*$ be a $w^*$-compact subset of a dual Banach space. We say that $K$ has \emph{$P(\ndot,w^*)$ with closed sets} if there is a sequence $(A_n)_{n=1}^\infty$ of $w^*$-closed subsets of $K$, such that given $f \in K$ and $\ep>0$, there exist $n\in\N$ and a $w^*$-open set $U$ with the property that $x \in A_n \cap U$ and $\ndot$-$\diam{A_n \cap U}<\ep$. Equivalently, $K$ as above has $P(\ndot,w^*)$ with closed sets if and only if $(K,w^*)$ is \emph{descriptive} and \emph{fragmented} by the norm \cite{or:04}. We shall require the following theorem.

\begin{thm}[{\cite[Theorems 1.3, 2.2 and 2.5]{or:04}}]\label{or_thm}
Let $X$ be a Banach space, let $K \subseteq X^*$ have $P(\ndot,w^*)$ with closed sets, and let $X^* = \cl{\aspan}^{\ndot}(K)$. Then $X$ admits a norm having LUR dual norm.
\end{thm}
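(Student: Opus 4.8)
The plan is to follow the nonlinear transfer technique of Moltó, Orihuela, Troyanski and Valdivia (see \cite{motv:09}): to produce an equivalent dual norm on $\dual X$ that is LUR, I would assemble it by averaging a countable family of bounded, convex, nonnegative, $\weakstar$-lower-semicontinuous functions on $\dual X$, the data for which come directly from the sets $A_n$ and the norm-small $\weakstar$-slices supplied by the hypothesis $P(\ndot,\weakstar)$ with closed sets. Recall the abstract engine: it is enough to exhibit, for each $\ep>0$, a countable cover of $\ball{\dual X}$ by sets each of which is \emph{$\weakstar$-slicewise of $\ndot$-diameter $<\ep$}, meaning every point of such a set lies in a $\weakstar$-slice $\set{f}{x(f)>\alpha}$ (with $x \in X$, so that the derived functions stay $\weakstar$-lower-semicontinuous) meeting the set in a subset of $\ndot$-diameter $<\ep$; descriptiveness guarantees this cover can be chosen coherently, via a $\sigma$-isolated network, and that coherence is what ultimately forces the LUR condition.

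First I would transfer the structure from $K$ to all of $\ball{\dual X}$. Since $\dual X = \cl{\lspan{K}}^{\ndot}$, I would write $\dual X = \bigcup_{m=1}^\infty \cl{m\,\conv(K \cup -K)}^{\ndot}$ and verify that the relevant property -- equivalently, $(K,\weakstar)$ being descriptive and norm-fragmented -- is stable under the operations in play: scalar multiples, symmetrization $K\cup -K$, finite convex combinations, countable unions, and, most delicately, $\ndot$-closure. The closure step is exactly where fragmentation (rather than mere descriptiveness) earns its keep, since it is what keeps the $\ndot$-diameters of the slicing pieces under control after passing to the limit. The output is a single $\sigma$-decomposition of $\ball{\dual X}$ into $\weakstar$-closed pieces, each slicewise $\ndot$-small for every prescribed $\ep$, inherited from the $(A_n)$ of $K$.

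With this decomposition in hand, for each piece $A$, each rational tolerance, and each functional $x \in X$ defining an isolating slice, I would build a bounded, convex, nonnegative, $\weakstar$-lower-semicontinuous function $\varphi_i$ that simultaneously registers the linear coordinate $x(f)$ cutting out the slice and a regularized proximity of $f$ to the piece $A$, and then set
\[
\tn{f}^2 \;=\; \n{f}^2 + \sum_{i=1}^\infty 2^{-i}\,\varphi_i(f)^2,
\]
after the usual normalisation ensuring convergence and equivalence of norms. Because each $\varphi_i$ is convex and $\weakstar$-lower-semicontinuous, $\tndot$ is again an equivalent dual norm.

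The hard part will be verifying that $\tndot$ is LUR. Given $\tn{f_j}\to\tn{f}$ and $\tn{f+f_j}\to 2\tn{f}$, strict convexity of $t\mapsto t^2$ forces each summand to have vanishing defect, giving $\varphi_i(f_j)\to\varphi_i(f)$ for every $i$; the real task is to convert this coordinatewise convergence into $\n{f-f_j}\to 0$. Here the descriptive bookkeeping is essential: I would use a minimal-index selection in the spirit of Lemma \ref{minimal} to pin down, consistently along a tail of the sequence, the single piece $A$ and slice isolating $f$ to $\ndot$-diameter $<\ep$, so that $f$ and almost all $f_j$ lie together in that slice and hence $\limsup_j \n{f-f_j}\leqslant 2\ep$; letting $\ep\to 0$ yields LUR. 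Managing this selection coherently across the countably many $\ep$ and pieces, while respecting the $\weakstar$-lower-semicontinuity needed for duality, is the main obstacle, and is precisely the content abstracted in \cite[Theorems 1.3, 2.2 and 2.5]{or:04}.
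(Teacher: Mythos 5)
The paper does not actually prove this statement: Theorem \ref{or_thm} is imported wholesale from Oncina and Raja \cite{or:04}, so there is no internal proof to compare against, and your attempt must be measured against the actual arguments of \cite{or:04} and \cite{raja:02}. Your sketch does follow the correct route from that literature --- Raja's characterisation of dual LUR renormability via countable covers of $\ball{\dual{X}}$ whose members are slicewise norm-small, stability of $P(\ndot,w^*)$ under convexification, and an equivalent dual norm built from a series of convex $w^*$-lower-semicontinuous functions --- but as a proof it has genuine gaps at exactly the hard points.

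First, and most seriously, it is circular at the crux. The hypothesis $P(\ndot,w^*)$ with closed sets supplies arbitrary $w^*$-open sets $U$ with $A_n \cap U$ of small $\ndot$-diameter, \emph{not} slices; your opening sentence asserts that the hypothesis supplies ``norm-small $w^*$-slices'', which it does not. Converting $w^*$-open localisation into localisation by half-spaces $\set{f}{x(f)>\alpha}$ with $x \in X$ --- which is what the $w^*$-lower-semicontinuity of your building blocks $\varphi_i$ genuinely requires --- is precisely the hard content of Raja's work, and it is available only after passing to $w^*$-closed \emph{convex} sets (the paper's Proposition \ref{prop_raja_P_conv}, i.e.\ \cite[Proposition 2.10]{raja:02}, combined with separation arguments); for the non-convex pieces $A_n$ themselves no such conversion holds. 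You concede this in your final sentence, where you say the coherent slice selection ``is precisely the content abstracted in \cite[Theorems 1.3, 2.2 and 2.5]{or:04}'' --- but those are the theorems to be proved, so the proposal defers rather than supplies the key step. Second, there are technical slips in the transfer stage: the decomposition $\dual{X} = \bigcup_{m=1}^\infty \cl{m\conv(K\cup -K)}^{\ndot}$ is problematic, because a norm-closed bounded convex subset of a dual space need not be $w^*$-closed, so these pieces need not be $w^*$-compact and the $P(\ndot,w^*)$ machinery does not apply to them directly; one must instead work with $\cl{\conv}^{w^*}(K\cup -K)$ and then cope with the fact that $\bigcup_m m\,\cl{\conv}^{w^*}(K\cup -K)$ is merely norm-dense in $\dual{X}$, which requires a limiting argument (this is where \cite[Lemma 2.1]{raja:02} enters). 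Finally, invoking ``a minimal-index selection in the spirit of Lemma \ref{minimal}'' for the LUR verification is a red herring: that Kleene--Brouwer lemma belongs to the permanence-property arguments of Section \ref{sect-top} of this paper and plays no role here; in Raja's framework the verification that coordinatewise convergence of the $\varphi_i$ forces $\n{f-f_j}\to 0$ proceeds by convexity estimates on the summed functions together with the slice localisation, not by an ordinal minimality selection.
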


It is well known that if $X$ has a separable dual then $X$ admits a norm having LUR dual norm (see e.g.~\cite[Theorem II.2.6]{dgz:93}). This result is an easy corollary of Theorem \ref{or_thm}. Indeed, if $X^*$ is separable, with $(f_n)$ norm-dense in $S_{X^*}$, then $X^*=\cl{\aspan}^{\ndot}(K)$, where $K:=\set{n^{-1}f_n}{n \in \N} \cup \{0\}$, and it is straightforward to see that $K$ has $P(\ndot,w^*)$ with closed sets. We use Theorem \ref{or_thm} to prove the main result of this section.

\begin{thm}\label{main}
Let $X$ admit a boundary that is both $\sigma$-$w^*$-LRC $w^*$-$K_\sigma$. Then $X$ admits a norm having a LUR dual norm. 
\end{thm}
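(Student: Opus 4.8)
The plan is to deduce Theorem \ref{main} from Theorem \ref{or_thm} by producing, from a boundary $B$ that is both $\sigma$-$w^*$-LRC and $w^*$-$K_\sigma$, a $w^*$-compact set $K \subseteq X^*$ that has $P(\ndot,w^*)$ with closed sets and whose closed linear span (in norm) is all of $X^*$. The natural candidate for $K$ is built from the span of $B$: by Theorem \ref{linspan}, $\lspan{B}$ is again $\sigma$-$w^*$-LRC and $w^*$-$K_\sigma$, and because $B$ is a boundary, $\lspan{B}$ is norm-dense in $X^*$ (a boundary is total, so its $w^*$-closed span is $X^*$; passing through the span and using that the norm-closed span of a boundary equals $X^*$ gives the density condition $X^* = \cl{\aspan}^{\ndot}(\lspan B)$). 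So the spanning hypothesis of Theorem \ref{or_thm} will be met once I exhibit a single $w^*$-compact $K$ inside $\lspan B$ that is still norm-dense in $X^*$ after taking closed spans; I expect to arrange this by intersecting $\lspan B$ with balls and rescaling, much as in the example following Theorem \ref{linspan}.

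The core of the proof is verifying that such a $K$ has $P(\ndot,w^*)$ with closed sets. Here I would exploit the equivalent characterisation quoted from \cite{or:04}: $K$ has $P(\ndot,w^*)$ with closed sets if and only if $(K,w^*)$ is descriptive and norm-fragmented. The fragmentation (small-local-diameter) half comes essentially for free from the LRC structure: by Proposition \ref{prop_lrc_facts} (2), with $\rho$ the norm metric, any $(w^*,\ndot)$-LRC set has small local diameter, and a $\sigma$-$w^*$-LRC set is a countable union of such pieces, which is exactly the kind of data needed for norm-fragmentability of the $w^*$-compact $K$. So the substantive work is to produce the sequence $(A_n)$ of $w^*$-closed subsets of $K$ witnessing $P(\ndot,w^*)$, i.e.\ to upgrade the LRC decomposition into a decomposition by $w^*$-closed sets with the small-local-diameter property relative to $K$.

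The main obstacle, as I see it, is precisely this passage from the $(\tau,\rho)$-LRC pieces (whose witnessing relatively compact neighbourhoods need not be closed subsets of $K$) to an honest descriptive/$P(\ndot,w^*)$ family $(A_n)$ of $w^*$-\emph{closed} sets. The LRC definition gives $w^*$-open sets $U$ with $\cl{E\cap U}^{\ndot}$ norm-compact, but Theorem \ref{or_thm} requires a countable family of $w^*$-closed sets refining the norm topology; Proposition \ref{prop_lrc_facts} (3) is the right tool here, since it replaces each LRC piece $E$ by $\cl{E}^\tau \cap V$, sandwiched between $E$ and $\cl{E}^{\rho}$ and still LRC, thereby letting me work with $w^*$-closed (in $K$) representatives. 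I would intersect these with $K$ to obtain the $A_n$, and then check descriptiveness, which in this norm-fragmented $w^*$-compact setting should follow from the countable, well-structured nature of the decomposition (a $\sigma$-isolated network argument, as is standard for descriptive compacta).

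Once $K$ is shown to have $P(\ndot,w^*)$ with closed sets and to satisfy $X^* = \cl{\aspan}^{\ndot}(K)$, Theorem \ref{or_thm} applies verbatim to yield a norm on $X$ with LUR dual norm, completing the proof. I would keep the verification of the small-local-diameter condition brief, since it is immediate from Proposition \ref{prop_lrc_facts} (2), and concentrate the exposition on the construction of the $w^*$-closed sets $A_n$ and the descriptiveness check, which is where the real content lies.
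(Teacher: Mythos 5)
Your overall strategy coincides with the paper's: reduce to Theorem \ref{or_thm} by exhibiting a $w^*$-compact set $K$ with $P(\ndot,w^*)$ with closed sets and $X^*=\cl{\aspan}^{\ndot}(K)$, and your plan for verifying $P(\ndot,w^*)$ --- use Proposition \ref{prop_lrc_facts} (3) to replace each LRC piece by one that is relatively $w^*$-open in its $w^*$-closure, then Proposition \ref{prop_lrc_facts} (2) for the small local diameters --- is exactly what the paper does. (Incidentally, no separate ``descriptiveness check'' is required: the definition of $P(\ndot,w^*)$ with closed sets asks only for the sequence $(A_n)$, so once you have constructed it you are done; descriptive-plus-fragmented is an equivalent reformulation, not an extra condition.) The genuine gap is in the spanning condition. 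You assert that ``the norm-closed span of a boundary equals $X^*$,'' deducing this from totality. Totality gives only $X^*=\cl{\aspan}^{w^*}(B)$; the \emph{norm}-closed span of a boundary can be a proper subspace. For example, in $X=\ell_1(\Gamma)$ with $\Gamma$ uncountable, so that $X^*=\ell_\infty(\Gamma)$, the set $B$ of all $\{-1,0,1\}$-valued functions with countable support is a boundary, yet every element of $\cl{\aspan}^{\ndot}(B)$ has countable support, so this span omits the constant function $1$. That space fails the hypotheses of Theorem \ref{main}, but it refutes the general principle you invoke; under the hypotheses of the theorem, norm-density of $\aspan(B)$ is indeed true, but, as the Remark following the paper's proof shows, establishing it needs the polyhedral machinery of \cite{fpst:14}, \cite{fonf:00} and \cite{vesely:00} --- it is nothing like a formal consequence of being a boundary. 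Passing to $\lspan{B}$ via Theorem \ref{linspan} does not help, because the norm closure of $\lspan{B}$ is again $\cl{\aspan}^{\ndot}(B)$, so the density problem is untouched.

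This is precisely where the paper has to do real work, using two ingredients absent from your proposal. First, the Fonf--Lindenstrauss theorem (Theorem \ref{fl_thm}) yields norm-density not of $\conv(B)$ but of the convex hull of $\bigcup_{n=1}^\infty \cl{\conv}^{w^*}(K_n)$, where the $K_n$ are the $w^*$-closures of the LRC pieces; accordingly, the spanning set must be assembled from the $w^*$-closed \emph{convex hulls}, and the paper takes
\[
K \;=\; \{0\} \cup \bigcup_{n=1}^\infty n^{-1}\cl{\conv}^{w^*}(K_n).
\]
Second, because of this, one needs Raja's result (Proposition \ref{prop_raja_P_conv}) that $P(\ndot,w^*)$ with closed sets survives taking $w^*$-closed convex hulls --- a $w^*$-closed convex hull of a $\sigma$-$w^*$-LRC set has no reason to be $\sigma$-$w^*$-LRC itself, so your LRC-based verification cannot be applied to it directly. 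Without these two steps (or a substitute of comparable strength), your candidate $K\subseteq\lspan{B}$ cannot be shown to satisfy $X^*=\cl{\aspan}^{\ndot}(K)$, and the appeal to Theorem \ref{or_thm} does not go through.
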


As indicated in Section \ref{sect_introduction}, this suggests to the author that the class of Banach spaces admitting norms having LUR dual norms is the correct setting for the study of Banach spaces supporting $\sigma$-$w^*$-LRC and $w^*$-$K_\sigma$ boundaries. We obtain the following corollary, which is an optimal strengthening of \cite[Proposition 20]{fpst:14}.

\begin{cor}\label{sigma-discrete}
Let $C(K)$ admit a norm $\tndot$ having a $\sigma$-$w^*$-LRC and $w^*$-$K_\sigma$ boundary. Then $K$ is $\sigma$-discrete.
\end{cor}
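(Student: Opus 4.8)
The plan is to feed Theorem \ref{main} into a topological argument carried out on the Dirac measures. By Theorem \ref{main}, the hypothesis that $C(K)$ carries \emph{some} equivalent norm with a $\sigma$-$\weakstar$-LRC and $\weakstar$-$K_\sigma$ boundary already forces $C(K)$ to admit an equivalent norm $\ndot$ whose dual norm $\ndot^*$ is LUR. Everything then reduces to the purely topological claim that \emph{if $C(K)$ admits a dual LUR norm then $K$ is $\sigma$-discrete}. To exploit this, I would work with the set $D=\set{\delta_t}{t \in K} \subseteq \dual{C(K)}$ of Dirac measures, which is $\weakstar$-compact and carries the $\weakstar$ topology that makes $t \mapsto \delta_t$ a homeomorphism onto $K$ (cf.\ Example \ref{weakstarlrcex} (4)); thus it suffices to exhibit $D$ as a countable union of relatively $\weakstar$-discrete sets. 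The crucial extra ingredient is that, since $\ndot^*$ is equivalent to the canonical (total variation) dual norm, there is a constant $c>0$ with $\n{\delta_s-\delta_t}^* \geqslant 2c$ whenever $s \neq t$; that is, $D$ is \emph{uniformly norm-separated}.

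Next I would invoke the converse direction of the characterisation in \cite{or:04} underlying Theorem \ref{or_thm}: a dual LUR norm fragments $(\ball{\dual{C(K)}},\weakstar)$ by $\ndot^*$ and renders it descriptive, so $(\ball{\dual{C(K)}},\weakstar)$ has $P(\ndot^*,\weakstar)$ with closed sets. This property passes to $\weakstar$-compact subsets, since one may intersect the witnessing family with the subset, so (after rescaling $D$ into the unit ball) there is a \emph{single} sequence $(A_n)$ of $\weakstar$-closed subsets of $D$ such that every $\delta_t \in D$ and every $\ep>0$ admit some $n$ and a $\weakstar$-open $U$ with $\delta_t \in A_n \cap U$ and $\ndot^*$-$\diam{A_n \cap U}<\ep$. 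The uniform separation now enters decisively: applying this with the \emph{fixed} threshold $\ep=c$, the diameter bound forces $A_n \cap U=\{\delta_t\}$, so each $\delta_t$ is $\weakstar$-\emph{isolated} in the corresponding $A_n$.

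To finish, set $B_n$ to be the set of $\weakstar$-isolated points of $A_n$. The previous paragraph shows $D=\bigcup_{n} B_n$, while the defining property of isolated points makes each $B_n$ relatively $\weakstar$-discrete; transporting through the homeomorphism $t\mapsto\delta_t$ expresses $K$ as a countable union of relatively discrete sets, which is exactly $\sigma$-discreteness. I expect the main obstacle to be the second paragraph, namely securing the \emph{fixed-sequence} strength of descriptiveness rather than a merely $\ep$-dependent $\sigma$-fragmentation: a decomposition allowed to vary with $\ep$ would only present each piece of $D$ as $\weakstar$-scattered, which need not be relatively discrete and would thus fall short of $\sigma$-discreteness. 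It is the availability of one sequence $(A_n)$, valid simultaneously for all $\ep$ and in particular for the threshold $\ep=c$ dictated by the uniform separation of the Dirac measures, that converts scatteredness into genuine relative discreteness.
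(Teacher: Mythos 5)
Your argument is correct, but it follows a genuinely different route from the paper's. Both proofs start the same way: Theorem \ref{main} converts the hypothesis into an equivalent norm on $C(K)$ whose dual norm is LUR. At that point the paper simply quotes \cite[Corollary 4.4]{raja:02} --- which states precisely that $C(K)$ admits a dual LUR norm if and only if $K$ is $\sigma$-discrete --- and stops. You instead re-derive the direction of that corollary that is needed: you invoke the converse of Theorem \ref{or_thm} (a dual LUR norm makes $(\ball{\dual{C(K)}},\weakstar)$ descriptive and norm-fragmented, equivalently gives it $P(\ndot,\weakstar)$ with closed sets), restrict the witnessing sequence to the $\weakstar$-compact set of rescaled Dirac measures, and use their uniform norm-separation to convert the small-diameter condition into isolatedness, so that the isolated points of the sets $A_n$ cover $D$ and exhibit $K$ as a countable union of relatively discrete sets. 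All of these steps check out, and your closing observation is exactly the right one: an $\ep$-dependent ($\sigma$-fragmentability-type) decomposition would only yield scatteredness, which is strictly weaker than $\sigma$-discreteness (witness $K=\wone+1$ in Example \ref{ex_counterexample}), and it is the single sequence $(A_n)$ valid for all $\ep$ that closes this gap. One caveat: the implication you lean on --- dual LUR $\Rightarrow$ descriptive and fragmented, i.e.\ $P(\ndot,\weakstar)$ with closed sets for the dual ball --- is nowhere stated in this paper, since Theorem \ref{or_thm} records only the opposite direction; it is a nontrivial theorem of Raja, so your proof carries an external dependency on \cite{raja:02}/\cite{or:04} just as the paper's does. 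In effect you have reconstructed Raja's own proof of his Corollary 4.4 rather than bypassed it: what your approach buys is a transparent explanation of \emph{why} $\sigma$-discreteness (and not mere scatteredness) emerges; what the paper's buys is a one-line proof.
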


\begin{proof}
According to \cite[Corollary 4.4]{raja:02}, $C(K)$ admits a norm having LUR dual norm if and only if $K$ is $\sigma$-discrete.
\end{proof}

The next example demonstrates that $\sigma$-$w^*$-LRC and $w^*$-$K_\sigma$ boundaries are not necessary for isomorphic polyhedrality in general.

\begin{example}\label{ex_counterexample}
If $K=\wone+1$, where $\wone$ is the least uncountable ordinal, then $K$ is not $\sigma$-discrete. On the other hand, $C(K)$ admits a polyhedral norm \cite[Theorem 1]{fonf:80}. Therefore there exists an isomorphically polyhedral Banach space that does admit any norm having a $\sigma$-$w^*$-LRC and $w^*$-$K_\sigma$ boundary.
\end{example}

We require two more results in order to prove Theorem \ref{main}. The first is a consequence of a result of Fonf and Lindenstrauss, concerning boundaries of $\weakstar$-compact convex sets.

\begin{thm}[{cf.\ \cite[Theorem 2.3]{fl:03}}]\label{fl_thm}
Let $B \subseteq \ball{\dual{X}}$ be a boundary of $X$, and let $B = \bigcup_{n=1}^\infty C_n$. Then
\[
\ball{\dual{X}} \;=\; \cl{\conv}^{\ndot}\left(\bigcup_{n=1}^\infty \cl{\conv}^{\weakstar}(C_n) \right).
\]
\end{thm}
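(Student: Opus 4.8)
The plan is to establish the two inclusions separately. Writing $K_n=\cl{\conv}^{\weakstar}(C_n)$ and $D=\cl{\conv}^{\ndot}\big(\bigcup_n K_n\big)$ for the right-hand side, the inclusion $D\subseteq\ball{\dual{X}}$ is immediate, since each $K_n$ lies in the $\weakstar$-closed convex set $\ball{\dual{X}}$. For the reverse inclusion it is worth first noting where the difficulty lies: because $B$ is a boundary, for every $x\in X$ one has $\sup_{g\in D}g(x)=\sup_{f\in B}f(x)=\n{x}$ (the middle supremum is computed over $\bigcup_n C_n$, using that $x$ is $\weakstar$-continuous and linear), so $\cl{D}^{\weakstar}=\ball{\dual{X}}$ already. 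Thus the theorem is equivalent to the assertion that the \emph{norm}-closed convex set $D$ is in fact $\weakstar$-closed, and the whole content lies in bridging the gap between the two closures. I would argue by contradiction: suppose $f_0\in\ball{\dual{X}}\setminus D$, and put $d=\inf_{g\in D}\n{f_0-g}>0$, positive because $D$ is norm-closed and convex.

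The key to organising the argument is to work not with the $K_n$ individually but with the increasing sets $L_N=\conv(K_1\cup\dots\cup K_N)$. Each $L_N$ is convex and, being the convex hull of finitely many $\weakstar$-compact convex sets, is itself $\weakstar$-compact; moreover $L_N\subseteq D$, so $\inf_{g\in L_N}\n{f_0-g}\geqslant d$, while $\bigcup_N L_N\supseteq B$. I would then separate $f_0$ from each $L_N$ by a vector of the \emph{predual}: applying Sion's minimax theorem to $(x,g)\mapsto f_0(x)-g(x)$ on $\ball{X}\times L_N$ (concave and norm-continuous in $x$, convex and $\weakstar$-continuous in the $\weakstar$-compact variable $g$) gives
\[
\sup_{x\in\ball{X}}\inf_{g\in L_N}\big(f_0(x)-g(x)\big)=\inf_{g\in L_N}\n{f_0-g}\geqslant d,
\]
so there is $x_N\in\ball{X}$ with $f_0(x_N)-\sup_{g\in L_N}g(x_N)\geqslant d/2$. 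The reason for passing to the increasing compact sets $L_N$ now becomes clear: every $f\in B$ lies in some $C_j$, hence in $K_j\subseteq L_N$ for all $N\geqslant j$, so $f(x_N)\leqslant f_0(x_N)-d/2$ for all sufficiently large $N$. Taking the supremum over $B$ yields $\sup_{f\in B}\limsup_N f(x_N)\leqslant\limsup_N f_0(x_N)-d/2$.

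The last step, which carries the real weight, is Simons' inequality. Since $B$ is a boundary, every convex combination $\sum_N\lambda_N x_N$ is a vector of $\ball{X}$ that attains its norm on $B$; Simons' inequality therefore applies to the bounded sequence $(x_N)$ and gives
\[
\sup_{f\in\ball{\dual{X}}}\limsup_N f(x_N)\ \leqslant\ \sup_{f\in B}\limsup_N f(x_N).
\]
Evaluating the left-hand side at $f_0$ produces $\limsup_N f_0(x_N)\leqslant\limsup_N f_0(x_N)-d/2$, which is absurd since the sequence is bounded; hence no such $f_0$ exists and $\ball{\dual{X}}\subseteq D$. I expect this final ingredient to be the main obstacle. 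A naive attempt to build a single separating vector as one fixed convex combination $\sum_n\lambda_n x_n$ of the individual separators fails, because for $f\in C_m$ the cross terms $\lambda_n f(x_n)$ with $n\neq m$ are uncontrolled and swamp the one good term; beating the boundary on every $C_m$ simultaneously forces a genuine limiting procedure, and Simons' inequality — the quantitative expression of the boundary hypothesis — is exactly what supplies it. The scheme follows the Fonf--Lindenstrauss argument of \cite{fl:03}, transposed to the $\weakstar$ topology of a dual space.
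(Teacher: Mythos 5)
Your proof is correct, but it is not the paper's argument: the paper offers no proof of this statement at all, deducing it instead as the special case $V=\ball{\dual{X}}$ of \cite[Theorem 2.3]{fl:03}, which says that any boundary of a $\weakstar$-compact convex set $V\subseteq\dual{X}$ ``$(I)$-generates'' it, i.e.\ $V=\cl{\conv}^{\ndot}\left(\bigcup_n\cl{\conv}^{\weakstar}(C_n)\right)$ for every countable cover of the boundary. Your route is a genuinely different, and valid, alternative: the distance $d>0$ from $f_0$ to the norm-closed convex set $D$, the exhaustion of $B$ by the $\weakstar$-compact convex sets $L_N=\conv(K_1\cup\dots\cup K_N)$, the duality $\sup_{x\in\ball{X}}\inf_{g\in L_N}\bigl(f_0(x)-g(x)\bigr)=\inf_{g\in L_N}\n{f_0-g}\geqslant d$ (Sion's minimax theorem applies since $L_N$ is $\weakstar$-compact and the pairing is affine and appropriately continuous in each variable), and finally Simons' sup-limsup inequality, whose hypothesis is satisfied because every countable convex combination $\sum_N\lambda_N x_N$ lies in $\ball{X}$ and, $B$ being a boundary, attains its supremum over $\ball{\dual{X}}$ at a point of $B$. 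One correction of attribution, however: your closing claim that this scheme ``follows the Fonf--Lindenstrauss argument'' is inaccurate. A selling point of \cite{fl:03} is precisely that their proof is elementary and avoids Simons' inequality, proceeding instead by an inductive ``small perturbation'' construction with convergent series of elements of $X$; deriving boundary results \emph{from} Simons' inequality, as you do, is the older, standard route. The trade-off between the two: your argument is short if Simons' inequality is accepted as a black box, whereas the Fonf--Lindenstrauss argument is self-contained and yields the stronger statement for boundaries of arbitrary $\weakstar$-compact convex sets, not just the dual ball -- which is the form the paper actually cites.
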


The next result comes from \cite{raja:02}, where it is stated in greater generality.

\begin{prop}[{\cite[Lemma 2.1 and Proposition 2.10]{raja:02}}]\label{prop_raja_P_conv} Let $X$ be a Banach space and let $K \subseteq X^*$ be a $w^*$-compact set having $P(\ndot,w^*)$ with closed sets. Then $\cl{\conv}^{w^*}(K)$ also has $P(\ndot,w^*)$ with closed sets.
\end{prop}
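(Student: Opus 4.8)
The plan is to reduce to the Oncina--Raja characterization \cite{or:04} recalled just before Theorem \ref{or_thm}: a $\weakstar$-compact set has $P(\ndot,\weakstar)$ with closed sets precisely when it is both \emph{descriptive} and fragmented by the norm. Set $C := \cl{\conv}^{\weakstar}(K)$; since $K$ is bounded, $C$ is bounded and $\weakstar$-closed, hence $\weakstar$-compact. It therefore suffices to show that $C$ inherits both properties from $K$.

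Norm-fragmentability of $C$ is the routine half. For a $\weakstar$-compact set, being fragmented by the norm is the same as being an Asplund set (equivalently, for convex sets, as having the Radon--Nikodym property), and it is a standard stability fact that the $\weakstar$-closed convex hull of a norm-fragmented $\weakstar$-compact set is again norm-fragmented. I would therefore settle this part by citation.

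The substance lies in producing a suitable network (equivalently, the witnessing sequence of $\weakstar$-closed sets) for $C$, and here I would exploit the fact that the finite convex combinations exhaust $C$ in the $\weakstar$-topology. For each $p \in \N$ write $\Delta_p = \set{(\oneton{\lambda}{p}) \in \R^p}{\lambda_i \geqslant 0,\ \sum_{i=1}^p \lambda_i = 1}$ and consider
\[
\map{\pi_p}{K^p \times \Delta_p}{C}, \qquad \pi_p(\oneton{f}{p},\oneton{\lambda}{p}) \;=\; \sum_{i=1}^p \lambda_i f_i,
\]
whose image $C_p$ is the set of convex combinations of at most $p$ points of $K$. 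Exactly as in the proof of Theorem \ref{linspan}, each $\pi_p$ is $\weakstar$-$\weakstar$ continuous and $\ndot$-continuous (on the product of the norm and Euclidean topologies), and, the domain being $\weakstar$-compact, it is $\weakstar$-$\weakstar$ perfect. The property $P(\ndot,\weakstar)$ is stable under finite products (in the manner of Proposition \ref{prodLRC}), under perfect, simultaneously $\weakstar$-$\weakstar$ and $\ndot$-continuous images (in the manner of Theorem \ref{perfectimage}), and trivially under countable unions; these are the stability results behind Raja's Lemma 2.1. Starting from the witnessing sets for $K$, one thus obtains such sets for each $K^p \times \Delta_p$, hence for each $C_p$, and hence for the $\weakstar$-dense union $\bigcup_{p=1}^\infty C_p$.

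The main obstacle is the final step, passing from the $\weakstar$-dense union to its $\weakstar$-closure $C$: a witnessing family for a dense subset need not witness the property for the whole set, and the difficulty is genuine because $\bigcup_p C_p$ is only $\weakstar$-dense, not $\ndot$-dense, in $C$. Two ingredients overcome it. First, replacing each witnessing set by its $\weakstar$-closure preserves small norm-diameter, since the norm is $\weakstar$-lower semicontinuous and hence the norm-diameter of a set equals that of its $\weakstar$-closure. Second, the norm-fragmentability of $C$ established above guarantees that every point of $C$ -- including the limit points lying outside $\bigcup_p C_p$ -- sits in a relatively $\weakstar$-open piece of arbitrarily small norm-diameter, which is what is needed to see that the closed-up family still resolves all of $C$. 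The delicate book-keeping, and the real crux of the argument, is to organise these $\weakstar$-closures by diameter scale so that the resulting countable family remains $\sigma$-isolated; once this is checked, the Oncina--Raja equivalence returns $P(\ndot,\weakstar)$ with closed sets for $C$.
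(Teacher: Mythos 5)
Your first half is sound and matches the start of the source argument: norm-fragmentability of $C:=\cl{\conv}^{w^*}(K)$ does follow from the Asplund-set/RNP stability theorem you cite, and decomposing $\conv(K)$ into the perfect images $C_p=\pi_p(K^p\times\Delta_p)$, in parallel with the paper's proof of Theorem \ref{linspan}, is how the cited source begins. (Note that the paper itself offers no proof of this proposition --- it is imported wholesale from \cite{raja:02} --- so the only meaningful comparison is with Raja's argument.) The genuine gap is at the step you yourself flag as the crux, and the mechanism you invoke there is false: norm-fragmentability of $C$ does \emph{not} guarantee that every point of $C$ sits in a relatively $w^*$-open piece of arbitrarily small norm-diameter. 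Fragmentability produces a small piece around \emph{some} point of each non-empty subset, never around a prescribed point; the pointwise statement you use would make the identity map $(C,w^*)\to(C,\ndot)$ continuous and hence force $C$ to be norm-compact. Concretely, let $X=c_0$ and $K=\{0\}\cup\set{e^*_n}{n\in\N}\subseteq\ell_1$. Then $K$ is $w^*$-compact and has $P(\ndot,w^*)$ with closed sets (take the witnessing sets $\{0\}$ and $K$; each $e^*_n$ is relatively $w^*$-isolated in $K$ via $U=\set{h\in X^*}{h(e_n)>1/2}$), and $C$ is norm-fragmented since $c_0$ is Asplund; yet $e^*_n\to 0$ in the $w^*$-topology, so every $w^*$-open $U\ni 0$ contains infinitely many $e^*_n$ and $\diam{C\cap U}\geqslant 2$. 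Of course the proposition still holds in this example --- the singleton $\{0\}$ rescues the point $0$ --- which shows that what saves a given point is a well-chosen \emph{small witnessing set}, not small pieces of $C$ itself; producing such sets for the points of $C\setminus\conv(K)$ is exactly what your sketch omits, and the ``delicate book-keeping'' you defer is not book-keeping but the entire content of the proposition. (Your first ingredient, that $w^*$-closures preserve norm-diameter by $w^*$-lower semicontinuity of the norm, is correct, but it only handles points already lying in $\conv(K)$.)

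What actually bridges the gap is to use fragmentability for a different purpose: to upgrade the $w^*$-density of $\conv(K)$ in $C$ to \emph{norm}-density, i.e.\ to show $\cl{\conv}^{w^*}(K)=\cl{\conv}^{\ndot}(K)$. Every $f\in C$ is the barycentre of a Radon probability measure $\mu$ on $(K,w^*)$; since the identity $(K,w^*)\to(K,\ndot)$ is fragmented, it is universally (Lusin) $\mu$-measurable, so the barycentre exists as a norm-convergent vector integral and lies in $\cl{\conv}^{\ndot}(K)$. Once norm-density is in hand, the extension problem becomes tractable: witnessing sets for $\conv(K)$ at scale $2^{-k}$ can be taken $w^*$-compact of norm-diameter at most $2^{-k}$, and their fattenings $(D_{n,k}+2^{-k}\ball{\dual{X}})\cap C$ remain $w^*$-compact, cover $C$, and have norm-diameter at most $3\cdot 2^{-k}$ --- no appeal to pointwise fragmentation is needed. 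This division of labour is, in effect, what the two cited results of \cite{raja:02} supply. A secondary remark: the stabilities of $P(\ndot,w^*)$ under finite products and perfect images, which you assert ``in the manner of'' Proposition \ref{prodLRC} and Theorem \ref{perfectimage}, concern LRC sets in the paper, not $P(\ndot,w^*)$; in a complete write-up these would also need proofs or precise citations, though they are minor compared with the closure step.
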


\begin{proof}[Proof of Theorem \ref{main}]
Suppose that $E = \bigcup_{n=1}^\infty E_n \subseteq \ball{\dual{X}}$ is a $w^*$-$K_\sigma$ boundary of $X$, where each $E_n$ is $w^*$-LRC. We can assume that $\cl{E_n}^{w^*} \subseteq E$ for all $n$ (if necessary, consider the sets $E_n \cap L_m$, where $E= \bigcup_{m=1}^\infty L_m$ and each $M_m$ is $w^*$-compact). Furthermore, by Proposition \ref{lrc} (3), we can assume that there are $w^*$-open sets $V_n$ such that $E_n = \cl{E_n}^{w^*} \cap V_n$.

Set $K_n = \cl{E_n}^{w^*}$. Our first objective is to show that $K_n$ has $P(\ndot,w^*)$ with closed sets. Fix $n$ and set $A_m = K_n \cap K_m$, $m\in\N$. Given $f \in K_n \subseteq E$ and $\ep>0$, locate $m\in\N$ such that $f \in E_m = K_m \cap V_m$. According to Proposition \ref{prop_lrc_facts} (2), there exists a $w^*$-open set $U$, which we can assume to be a subset of $V_m$, such that $f \in E_m \cap U$ and $\diam{E_m \cap U} < \ep$. Consequently, $f \in A_m \cap U$ and $A_m \cap U \subseteq K_m \cap V_m \cap U = E_m \cap U$. It follows that $K_n$ has $P(\ndot,w^*)$ with closed sets, as required.

By Theorem \ref{fl_thm}, we have
\[
B_{X^*} \;=\; \cl{\conv}^{\ndot}\left(\bigcup_{n=1}^\infty \cl{\conv}^{w^*}(K_n) \right).
\]
From this, it is easy to see that if we define the $w^*$-compact set
\[
K \;=\; \{0\} \cup \bigcup_{n=1}^\infty n^{-1}\cl{\conv}^{w^*}(K_n),
\]
then $\dual{X} = \cl{\aspan}^{\ndot}(K)$. Moreover, each set $\cl{\conv}^{w^*}(K)$ has $P(\ndot,w^*)$ with closed sets by Proposition \ref{prop_raja_P_conv} and, being a countable union of such sets, it is clear that $K$ has this property also. We conclude the proof by invoking Theorem \ref{or_thm}.
\end{proof}

\begin{rem}
As it happens, if the set $E$ satisfies the properties given in the proof of
Theorem \ref{main}, and is in addition symmetric (which we can easily assume by considering $E \cup (-E)$ if necessary), then $\ball{\dual{X}} = \cl{\conv}^{\ndot}(E)$. Thus, in the proof above, it is sufficient to consider $K = \{0\} \cup \bigcup_{n=1}^\infty n^{-1} K_n$.

Indeed, the proof of \cite[Theorem 7]{fpst:14} shows that for a given $\ep>0$, there exists a function $\map{\psi}{E}{(1,1+\ep)}$ and a subset $B \subseteq E$, such that if we set $D=\set{\psi(f)f}{f \in B}$ and $\tn{x}=\sup\set{g(x)}{g \in D}$, then $\tndot$ is a polyhedral norm, $\n{x} \leqslant \tn{x}$ for all $x \in X$, and $D$ is a boundary of $\tndot$.

According to \cite[Theorem 3.9]{fonf:00} and \cite[Theorem 2]{vesely:00}, the polyhedrality of $\tndot$ ensures that $\ball{\dual{(X,\ttrin)}} =
\cl{\conv}^{\ndot}(D)$. Therefore, we have the inclusions
\[
\ball{\dual{X}} \;\subseteq\; \ball{\dual{(X,\ttrin)}} \;=\; \cl{\conv}^{\ndot}(D)
\;\subseteq\; (1+\ep)\cl{\conv}^{\ndot}(E),
\]
from which we obtain
\[
(1+\ep)^{-1}\ball{\dual{X}} \;\subseteq\; \cl{\conv}^{\ndot}(E),
\]
for all $\ep>0$. The result follows.
\end{rem}

\end{document}